\numberwithin{equation}{subsection}
\DeclareMathOperator{\Gal}{Gal}
\DeclareMathOperator{\id}{id}
\DeclareMathOperator{\Adm}{Adm}
\theoremstyle{plain}
\newtheorem{theorem}{Theorem}[section]
\newtheorem{lem}[theorem]{Lemma}
\newtheorem{thm}[theorem]{Theorem}
\newtheorem{cor}[theorem]{Corollary}
\newtheorem{prop}[theorem]{Proposition}
\theoremstyle{definition}
\newtheorem{rem}[theorem]{Remark}
\newtheorem{example}[theorem]{Example}
\def\ge{\geqslant}
\def\le{\leqslant}
\def\a{\alpha}
\def\b{\beta}
\def\g{\gamma}
\def\G{\Gamma}
\def\L{\Lambda}
\def\e{\epsilon}
\def\io{\iota}
\def\o{\omega}
\def\s{\sigma}
\def\t{\tau}
\def\th{\theta}
\def\k{\kappa}
\def\l{\lambda}
\def\z{\zeta}
\def\i{^{-1}}
\def\tSS{\tilde{\mathbb S}}
\def\SS{\mathbb S}
\def\ZZ{\mathbb Z}
\def\NN{\mathbb N}
\def\QQ{\mathbb Q}
\def\RR{\mathbb R}
\def\co{\mathcal O}
\def\cs{\mathcal S}
\def\aa{\mathbf a}
\def\ua{\underline {\mathbf a}}
\def\tt{\tilde \t}
\def\tW{\tilde W}
\def\tw{\tilde w}
\def\Ad{\text{Ad}}
\def\ba{\mathbf a}
\def\lto
\newcommand{\A}[1]{\mathbbm{A}^{#1}}
\def\<{\langle}
\def\>{\rangle}
\def\rf{\rfloor}
\def\lf{\lfloor}
\def\rc{\lceil}
\def\lc{\rceil}
\def\ua{\underline{\mathbf{a}}}
\def\ev{\text{av}}
\def\h{\text{h}}
\def\tt{\text{t}}
\def\fs{\mathfrak{S}}
\def\C{\text{Con}}
\def\cyc{\text{cyc}}
\def\slp{\text{sl}}
\def\A{\textbf{av}}
\def\bc{\textbf{c}}
\begin{document}

\title[On the acceptable elements]{On the acceptable elements}

\author[X. He]{Xuhua He}
\address{Department of Mathematics, University of Maryland, College Park, MD 20742, USA and Department of Mathematics, HKUST, Hong Kong}
\email{xuhuahe@math.umd.edu}
\author{Sian Nie}
\address{Institute of Mathematics, Academy of Mathematics and Systems Science, Chinese Academy of Sciences, 100190, Beijing, China}
\email{niesian@amss.ac.cn}
\keywords{Newton polygons, $p$-adic groups, affine Weyl groups}

\begin{abstract}
In this paper, we study the set $B(G, \{\mu\})$ of acceptable elements for any $p$-adic group $G$. We show that $B(G, \{\mu\})$ contains a unique maximal element and the maximal element is represented by an element in the admissible subset of the associated Iwahori-Weyl group.
\end{abstract}

\maketitle

\section*{Introduction}

Let $F$ be a finite field extension of $\QQ_p$ and $L$ be the completion of the maximal unramified extension of $F$. Let $G$ be a connected reductive algebraic group over $F$ and $\s$ be the Frobenius morphism. We denote by $B(G)$ the set of $\s$-conjugacy classes of $G(L)$. The set $B(G)$ is classified by Kottwitz in \cite{Ko1} and \cite{Ko2}. This classification generalizes the Dieudonn\'e-Manin classification of isocrystals by their Newton polygons.

Let $\tW$ be the Iwahori-Weyl group of $G$ over $L$. Let $\{\mu\}$ be a geometric conjugacy class of cocharacters of $G$.
Let $\Adm(\{\mu\}) \subseteq \tW$ be the admissible subset of $\tW$ (\cite{R:guide} and \cite{KR1}) and $B(G, \{\mu\})$ be the finite subset of $B(G)$ defined by the group-theoretic version of Mazur's theorem \cite[\S 6]{Ko2}.

The main result of this paper is as follows.

\begin{thm}\label{main}
The set $B(G, \{\mu\})$ contains a unique maximal element and this element is represented by an element in $\Adm(\{\mu\})$.
\end{thm}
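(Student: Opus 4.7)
The plan is to exhibit a concrete representative in $\Adm(\mu)$ whose $\sigma$-conjugacy class realizes the top of the Mazur inequality, and to deduce uniqueness from Kottwitz's $(\kappa,\nu)$-classification of $B(G)$. I would pick the dominant representative in the conjugacy class $\{\mu\}$ and consider the pure translation $t^\mu \in \tW$. By the very definition $\Adm(\mu) = \bigcup_{w \in W_0}\{x \in \tW : x \leq t^{w(\mu)}\}$, the element $t^\mu$ lies in $\Adm(\mu)$ directly, so the representative requirement is automatic and the whole problem reduces to a statement inside $B(G)$.

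Next, I would show that $[t^\mu] \in B(G)$ belongs to $B(G,\mu)$ and saturates the Mazur inequality. Iterating the $\sigma$-action yields
\[
(t^\mu \sigma)^n = t^{\mu + \sigma(\mu) + \cdots + \sigma^{n-1}(\mu)} \cdot \sigma^n,
\]
from which one reads that the Newton point of $[t^\mu]$ is $\mu^\diamond$, the dominant $\sigma$-average of $\mu$, while its Kottwitz invariant is $\mu^\natural$, the image of $\mu$ in $\pi_1(G)_\sigma$. Both conditions of Mazur's inequality are met with equality, hence $[t^\mu] \in B(G,\mu)$. I would then invoke Kottwitz's partial order on $B(G)$, namely $[b_1] \leq [b_2]$ iff $\kappa(b_1) = \kappa(b_2)$ and $\nu_{b_1} \leq \nu_{b_2}$. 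Every class in $B(G,\mu)$ satisfies $\kappa = \mu^\natural$ and $\nu \leq \mu^\diamond$ by definition, and therefore lies below $[t^\mu]$. Together with the faithfulness of the $(\kappa,\nu)$-classification, this forces $[t^\mu]$ to be the unique maximum of $B(G,\mu)$.

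The main technical obstacle is the non-quasi-split case: then the $\sigma$-action on $\tW$ is twisted by an inner form, and the naive translation $t^\mu$ may need to be corrected by a length-zero element (encoding the basic class of the inner twist) so that the Newton-point computation still returns $\mu^\diamond$ with respect to the correct Frobenius. A further care is that $\mu$ is only a conjugacy class defined over $L$, so choosing a representative compatible with the twisted $\sigma$ requires some bookkeeping. Verifying that the resulting modified translation remains in $\Adm(\mu)$ and realizes the required maximum is where the bulk of the careful work lies; once settled, the Kottwitz-order argument above carries over verbatim.
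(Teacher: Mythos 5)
The core issue is that your argument only works in the quasi-split case, and the paper explicitly flags this as the easy case. You claim the Newton point of $[t^\mu]$ is $\mu^\diamond$ via the iteration $(t^\mu\sigma)^n = t^{\mu+\sigma(\mu)+\cdots}\sigma^n$. But this identity gives $\mu^\diamond$ as Newton point only when $\sigma$ acts on $\tW$ by the diagram automorphism $\sigma_0$ alone. In the non-quasi-split case one has $\sigma=\mathrm{Ad}(\tau)\circ\sigma_0$ with $\tau\in\tW$ a nontrivial length-zero element, so that $\nu_{t^\mu,\sigma}=\nu_{t^\mu\tau,\sigma_0}$ and the element $t^\mu\tau$ is \emph{not} a translation; its Newton point is generally strictly less than $\mu^\diamond$. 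More fundamentally, $\mu^\diamond$ need not even belong to $B(G,\mu)$: the corollary following Lemma~\ref{Newton} shows that $\mu^\diamond\in B(G,\mu)$ iff $\langle\omega_c,\lambda^\diamond\rangle\in\ZZ$ for every $\sigma_0$-orbit $c$ of $I(\mu^\diamond)$, an integrality condition that fails in general. So the maximum you claim to exhibit usually does not exist in the set.

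You acknowledge the non-quasi-split obstacle, but describe it as "correcting $t^\mu$ by a length-zero element" plus "bookkeeping." That underestimates the problem: the issue is not that the representative $t^\mu$ needs a small modification, it is that the target Newton point $\mu^\diamond$ itself is wrong. One must first determine \emph{which} dominant rational coweight is actually the maximum of $B(G,\mu)$ — this is done via Lemma~\ref{Newton} (characterizing which $\nu$ are realizable) together with the construction in the proof of Theorem~\ref{main'}(1) using Chai's results on minimal elements of $C_{\ge E}$ — and then, separately, construct a $w\in\Adm(\mu)$ realizing that Newton point, which after the reductions of \S3--\S4 boils down to the delicate combinatorial argument for superbasic $\sigma$ in $GL_n$ (Proposition~\ref{superbasic}). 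Neither step is present in your sketch; the Kottwitz-order comparison $\nu\le\mu^\diamond$ cannot substitute for them because $\mu^\diamond$ is not attained.
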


For quasi-split groups, this is obvious as the unique maximal element of $B(G, \{\mu\})$ is represented by a translation element. However, it is much more complicated for non quasi-split groups.

This result is an important ingredient in the proof \cite{He00} of the Kottwitz-Rapoport conjecture \cite[Conjecture 3.1]{KR} and \cite[Conjecture 5.2]{R:guide} on the union of affine Deligne-Lusztig varieties. The knowledge of the explicit description of the maximal element of $B(G, \{\mu\})$ is also useful in the study of the $\mu$-ordinary locus, the most general Newton stratum, of Shimura varieties.

\

In fact, Theorem \ref{main} is a statement on the Iwahori-Weyl group $\tW$, the automorphism on $\tW$ induced from the Frobenius morphism $\s$ of $G$ and a cocharacter $\mu$ in $\{\mu\}$. In section 1, we introduce a set $B(\tW, \mu, \s)$ (for any diagram automorphism $\s$ on $\tW$) and reformulate Theorem \ref{main} as a statement on the triple $(\tW, \mu, \s)$. The relation between $B(G, \{\mu\})$ and $B(\tW, \mu, \s)$ is discussed in the Appendix.

This reformulation allows us to relate different diagram automorphisms of the Iwahori-Weyl groups, which plays an essential role in the proof. We show that the set $B(\tW, \mu, \s)$ contains a unique maximal element in section 2. It requires more work to show that this maximal element is represented by an element in the admissible set. We show in section 3 that it suffices to consider the superbasic case and in section 4 that it suffices to consider the irreducible case. In section 5, we prove the statement for the irreducible superbasic case (that is, $\s$ is a diagram automorphism of order $n$ for the Iwahori-Weyl group of $PGL_n$). This completes the proof of Theorem \ref{main}.

\section{Preliminaries}\label{1}

\subsection{} Let $\mathfrak R=(X^*, R, X_*, R^\vee, \Pi)$ be a based reduced root datum, where $R \subseteq X^*$ is the set of roots, $R^\vee \subseteq X_*$ is the set of coroots and $\Pi \subseteq R$ is the set of simple roots. Let $\langle ~ , ~ \rangle : X^* \times X_* \rightarrow \ZZ$ be the natural perfect pairing between $X^*$ and $X_*$. Let $V=X_* \otimes \RR$. For any $\a \in R$, we have a reflection $s_\a$ on $V$ sending $v$ to $v-\langle \a, v\rangle \a^\vee$.

The reflections $s_\a$ generate the {\it finite Weyl group} $W_0$ of $R$. Let $\SS=\{s_\a; \a \in \Pi\}$ be the set of simple reflections. Then $(W_0, \SS)$ is a Coxeter system.

For any $J \subseteq \SS$, let $W_J$ be the subgroup of $W_0$ generated by $J$ and ${}^J W_0=\{w \in W_0; w=\min(W_J w)\}$.

The {\it closed dominant chamber} is the set $$\mathfrak C=\{v \in V; \langle \a, v\rangle \geq 0 \text{ for every } \a \in \Pi\}.$$ Then for any $v \in V$, the set $\{w(v); w \in W_0\}$ contains a unique element in $\mathfrak C$. We denote this element by $\bar v$.

\subsection{} \label{alcove} Set $\tilde R=R \times \ZZ$. For $(\a, k) \in \tilde R$, we have an affine root $\tilde \a=\a+k$ and an affine reflection $s_{\tilde \a}$ on $V$ sending $v$ to $v-(\<\a, v\> - k) \a^\vee$. For any affine root $\tilde \a$, let $H_{\tilde \a}$ be the hyperplane in $V$ fixed by the reflection $s_{\tilde \a}$. Set \begin{gather*}
W_a=\ZZ R^\vee \rtimes W_0=\{t^\l w; \l \in \ZZ R^\vee, w \in W_0\}, \\
\tW=X_* \rtimes W_0=\{t^\l w; \l \in X_*, w \in W_0\}.
\end{gather*}
We call $W_a$ the {\it affine Weyl group} and $\tW$ the {\it extended affine Weyl group}. Let $\text{Aff}(V)$ be the group of affine transformations on $V$. We realize both $W_a$ and $\tW$ as subgroups of $\text{Aff}(V)$, where $t^\l$ acts by translation $v \mapsto v+\l$ on $V$. We may identify $W_a$ with the subgroup of $\text{Aff}(V)$ generated by the affine reflections.

Let $R^+ \subseteq R$ be the set of positive roots determined by $\Pi$. The {\it base alcove} is the set $$\aa=\{v \in V; 0 < \< \a, v\> < 1 \text{ for every } \a \in R^+\}.$$

The set of positive affine roots is $\{\a+k; \a \in R^+, k \geq 1\} \cup \{-\a+k; \a \in R^+, k \geq 0\}$. The affine simple roots are $-\a$ for $\a \in \Pi$ and $\b+1$, where $\b$ runs over maximal positive roots in $R$. Note that the positive roots in $R$ are {\it not} positive as affine roots.

The hyperplanes $H_{\tilde \a}$, for the affine simple roots $\tilde \a$, are exactly the walls of the base alcove $\aa$.
Let $\tilde \SS$ be the set of $s_{\tilde \a}$, where $\tilde \a$ runs over affine simple roots. Then $\SS \subseteq \tilde \SS$ and $(W_a, \tilde \SS)$ is a Coxeter group.

Let $\Omega$ be the isotropy group in $\tW$ of the base alcove $\aa$. Then $\tW=W_a \rtimes \Omega$. We extend the Bruhat order on $W_a$ to $\tW$ as follows: for $w, w' \in W_a$ and $\t, \t' \in \Omega$, we say that $w \t \le w' \t'$ if $\t=\t'$ and $w \le w'$ (with respect to the Bruhat order on the Coxeter group $W_a$). We put $\ell(w \t)=\ell(w)$.

\subsection{}\label{1.3} Let $\s \in \text{Aff}(V)$ be an automorphism of finite order such that $\s(\aa)=\aa$ and the conjugation action of $\s$ stabilizes $\tW$. Then $\s$ induces a bijection on the set of walls of $\aa$ and hence a bijection on $\tilde \SS$. Let $\varsigma \in GL(V)$ denote the linear part of $\s$ with respect to the decomposition $\text{Aff}(V)=V \rtimes GL(V)$. Then $\s$ acts by conjugation on the translation subgroup of $\text{Aff}(V)$ via $\varsigma$: $$\Ad(\s)(t^\xi)=t^{\varsigma(\xi)} \quad \text{ for } \xi \in V.$$ Since we assume that $\s$ normalizes $\tW$, it follows that $\varsigma$ stabilizes $X_*$ and normalizes $W_0$. 

The $\s$-conjugation action on $\tW$ is defined by $w \cdot_\s w'=w w' \Ad(\s)(w) \i$. We have two invariants on the $\s$-conjugacy classes.

Since $\s(\aa)=\aa$, the conjugation action of $\s$ stabilizes $\Omega$. Let $\Omega_\s$ be the set of $\s$-coinvariants on $\Omega$. The Kottwitz map $\k_{\tW, \s}: \tW \to \Omega_\s$ is obtained by composing the natural projection map $\tW \to \tW/W_a \cong \Omega$ with the projection map $\Omega \to \Omega_\s$. It is constant on each $\s$-conjugacy class of $\tW$. This gives one invariant.

Another invariant is given by the Newton map.

For any $w \in \tW$, we consider the element $w \s \in \text{Aff}(V)$. There exists $n \in \NN$ such that $(w \s)^n=t^\xi$ for some $\xi \in X_*$. Let $\nu_{w, \s}=\xi/n$ and $\bar \nu_{w, \s}$ be the unique dominant element in the $W_0$-orbit of $\nu_{w, \s}$. We call $\bar \nu_{w, \s}$ the {\it Newton point} of $w$ (with respect to the $\s$-conjugation action). It is known that $\nu_{w, \s}$ is independent of the choice of $n$ and $\bar \nu_{w, \s}=\bar \nu_{w', \s}$ if $w$ and $w'$ are $\s$-conjugate. Moreover, $w \s t^\xi=t^\xi w \s \in \text{Aff}(V)$. Hence $w \Ad(\s)(t^{\xi})=t^\xi w$. Therefore $\varsigma(\xi)$ and $\xi$ are in the same $W_0$-orbit and \[\tag{a}\bar \nu_{w, \s}=\overline{\varsigma(\nu_{w, \s})}.\]

\subsection{}\label{1.4} Let $\mu$ be a dominant cocharacter, i.e., $\mu \in X_* \cap \mathfrak C$. The $\mu$-admissible set is defined as $$\Adm(\mu)=\{w \in \tW; w \le t^{x(\mu)} \text{ for some }x \in W_0\}.$$

The partial order on $\mathfrak C$ is defined as follows. Let $v, v' \in \mathfrak C$. We say that $v \leq v'$ if $v'-v \in \sum_{\a \in \Pi} \RR_{\geq 0} \a^\vee$.

Let $N$ be the order of $\s$. For $\mu \in X_*$, we define \begin{gather*} \mu^\diamondsuit_\s =\frac{1}{N} \sum_{i=0}^{N-1} \overline{\varsigma^i(\mu)} \in \mathfrak C \\ \mu^\clubsuit_\s=\nu_{t^\mu, \s}=\frac{1}{N} \sum_{i=0}^{N-1} \varsigma^i(\mu). \end{gather*} If $\s(0)=0$, then $\mu^\diamondsuit_\s=\mu^\clubsuit_\s$. Set $$B(\tW, \mu, \s)=\{\bar \nu_{w, \s}; w \in \tW, \k_{\tW, \s}(w)=\k_{\tW, \s}(t^\mu), \bar \nu_{w, \s} \leq \mu^\diamondsuit_\s \}.$$ The elements in $B(\tW, \mu, \s)$ are called the {\it acceptable elements} for $\mu$.

\

The main result of this paper is as follows.

\begin{thm}\label{main'}

(1) The set $B(\tW, \mu, \s)$ contains a unique maximal element $\nu$ (with respect to the partial order $\leq$ on $\mathfrak C$).

(2) There exists an element $w \in \Adm(\mu)$ with $\bar \nu_{w, \s}=\nu$.
\end{thm}

The relation between the sets $B(\tW, \mu, \s)$ and $B(G, \{\mu\})$ will be discussed in the Appendix A.

\section{The maximal element in $B(\tW, \mu, \s)$}

\subsection{}\label{adj} Let $\mathfrak R_{ad}=(\ZZ R, R, X_*^{ad}, R^\vee, \Pi)$ be the root datum of the adjoint group of the reductive group with root datum $\mathfrak R$. Here $X_*^{ad}$ is the dual lattice of $\ZZ R$. The perfect pairing $\langle ~ , ~ \rangle : X^* \times X_* \rightarrow \ZZ$ induces a natural map $\pi: X_* \otimes \RR \to X_*^{ad} \otimes \RR$. Set $\tW_{ad}=X_*^{ad} \rtimes W_0$. Then the map $\pi$ induces a natural map $\pi: \tW \to \tW_{ad}$.

\begin{lem}\label{0-1-1}
Let $v \in X_*^{ad} \otimes \RR$ and $\hat{v}$, $\hat{v}'$ be two lifts of $v$ under $\pi$. Then $\pi \circ \s(\hat{v})=\pi \circ \s(\hat{v}')$.
\end{lem}

\begin{proof}
Note that $\s(\hat{v})=\s(\hat{v}')+\s(\hat{v}-\hat{v}')-\s(0)$. Since $\varsigma$ normalizes $W_0$, it gives a permutation of the hyperplanes $H_\a=\{v \in V; \<\a, v\>=0\}$ for $\a \in R$. As $\hat{v}-\hat{v}'$ lies in the intersection of these hyperplanes, we have $\s(\hat{v}-\hat{v}')-\s(0)=\varsigma(\hat{v}-\hat{v}')$ is still in this intersection. In other words, $\pi(\s(\hat{v}-\hat{v}')-\s(0))=0$. Thus $\pi \circ \s(\hat{v})=\pi \circ \s(\hat{v}')$.
\end{proof}

Now we define a map $\s_{ad}: X_*^{ad} \otimes \RR \to X_*^{ad} \otimes \RR$ by $v \mapsto \pi \circ \s(\hat{v})$, where $\hat{v} \in V$ is a lift of $v \in X_*^{ad} \otimes \RR$ under $\pi$.
By Lemma \ref{0-1-1}, $\s_{ad}$ is well defined. The affine transformation $\s_{ad}$ on $X_*^{ad} \otimes \RR$ induces a conjugation action on $\tW_{ad}$.

It is easy to see that $\pi(\nu_{w, \s})=\nu_{\pi(w), \s_{ad}}$ for $w \in \tW$ and $\pi$ induces a bijection of posets from $B(\tW, \mu, \s)$ to $B(\tW_{ad}, \pi(\mu), \s_{ad})$. The map $\pi$ also induces a bijection of posets from $\Adm(\mu)$ to $\Adm(\pi(\mu))$. Thus Theorem \ref{main'} holds for $B(\tW, \mu, \s)$ if and only if it holds for $B(\tW_{ad}, \pi(\mu), \s_{ad})$.

\begin{lem}\label{trans}
If $\mathfrak R=\mathfrak R_{ad}$, then $\Omega$ acts simply transitively on the set of special vertices of $\aa$.
\end{lem}

\begin{rem}
This Lemma is known to experts. We include a proof for the reader's convenience.
\end{rem}

\begin{proof}
First, the action of $\Omega$ on $V$ stabilizes the set of special vertices of $\aa$.

Let $v \in V$ be a special vertex of $\aa$. By \cite[VI \S 2 Prop. 3]{Bo}, $v \in X_*$. Since $W_0$ acts simply transitively on the set of chambers, there exists a unique $x \in W_0$ such that $x t^{-v}$ sends $\aa$ to an alcove $\aa'$ in the dominant chamber. Since $x t^{-v}(v)=0$, we deduce that $0$ lies in the closure of $\aa'$.

Note that $\aa$ is the unique alcove in the dominant chamber whose closure contains $0$. Hence $\aa'=\aa$ and $x t^{-v} \in \Omega$. In other words, there exists an element in $\Omega$ sending $v$ to $0$. So $\Omega$ acts transitively on the set of special vertices of $\aa$.

Let $w \in \tW$. If $w$ preserves $0$, then $w \in W_0$. Note that $\Omega \cap W_0=\{1\}$. Thus the action of $\Omega$ on the set of special vertices is simply transitive.
\end{proof}

\subsection{}\label{adjoint}
In the rest of this section, we assume that $\mathfrak R=\mathfrak R_{ad}$.

By Lemma \ref{trans}, there exists $\t \in \Omega$ such that $\s_0:=\t \i \s$ preserves $0 \in V$. In the rest of the paper, unless otherwise stated, we denote by $\l$ the dominant cocharacter with $\t \in t^\l W_0$.

Notice that $\s_0$ is a linear action on $V$ and the conjugation action of $\s_0$ stabilizes  the subset $\SS$ of $\tSS$. For simplicity, in the rest of the paper, we will say $\s_0$-orbits in $\SS$ instead of $\Ad(\s_0)$-orbits in $\SS$. 

By definition, $\nu_{w, \s}=\nu_{w \t, \s_0}$ for all $w \in \tW$.  By \ref{1.3} (a), $$\bar \nu_{w, \s}=\bar \nu_{w \t, \s_0} \in \mathfrak C^{\s_0}:=\{v \in \mathfrak C; \s_0(v)=v\}.$$

\begin{lem}\label{suites} If $\xi \in X_* \cap \mathfrak C$, then $\xi^\diamondsuit_\s=\xi^\clubsuit_{\s_0}=\nu_{t^\xi, \s_0}$.
\end{lem}

\begin{proof}
Note that $\s_0=\t \i \s$ and is linear. We have $\s_0=x \varsigma$ for some $x \in W_0$. Since $\varsigma$ normalizes $W_0$, we have $\s_0^i \in W_0 \xi^i$ for any $i \in \ZZ$. Therefore $\overline{\varsigma^i(\zeta)}=\overline{\s_0^i(\zeta)}$. Since $\s_0$ stabilizes $\mathfrak C$, $\xi^\diamondsuit_\s=\xi^\diamondsuit_{\s_0}=\xi^\clubsuit_{\s_0}$.
\end{proof}

\subsection{} For any $i \in \SS$, let $\o^\vee_i \in V$ be the corresponding fundamental coweight and $\a_i^\vee \in V$ be the corresponding simple coroot. We denote by $\o_i, \a_i \in V^*$ the corresponding fundamental weight and corresponding simple root, respectively.

For each $\s_0$-orbit $c$ of $\SS$, we set $$\o_c=\sum_{i \in c} \o_i.$$ For any $v \in \mathfrak C$, we set
\begin{gather*} J(v)=\{s \in \SS; s(v)=v\}, \\ I(v)=\SS \smallsetminus J(v).
\end{gather*} If $v=\s_0(v)$, then both $J(v)$ and $I(v)$ are $\s_0$-stable.

The following lemma is essentially contained in \cite[\S 7.1]{Chai}. Due to its importance, we provide a proof for completeness.
\begin{lem}\label{Newton}
Let $v \in \mathfrak C^{\s_0}$. Then $v =\nu_{w, \s}$ for some $w \in t^\mu W_a$ if and only if $\<\o_c, \mu^\clubsuit_{\s_0} + \l^\clubsuit_{\s_0}-v\> \in\ZZ$ for any $\s_0$-orbit $c$ of $I(v)$.
\end{lem}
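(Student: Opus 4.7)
The plan is to reformulate the claim as a lattice computation on the $W_a$-coset $t^{\mu+\l}W_a$. Since $\nu_{w,\s}=\nu_{w\t,\s_0}$ and $\t\in t^\l W_a$ (using $W_0\subset W_a$), setting $w':=w\t$ reduces the lemma to deciding which $\s_0$-invariant dominant $v$ arise as $\nu_{w',\s_0}$ for some $w'=t^\xi u\in t^{\mu+\l}W_a$ with $u\in W_0$. The identity $(w'\s_0)^n=t^{\sum_{i=0}^{n-1}(u\s_0)^i\xi}(u\s_0)^n$ shows that, whenever $(u\s_0)^n=1$, the Newton point equals the $\<u\s_0\>$-average of $\xi$.

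For necessity, I would $\s_0$-conjugate $w'$ in $\tW$ to reach a Levi-straight representative $\tilde w'=t^\xi u$ with $u\in W_J$ where $J:=J(v)$, allowing $\tilde w'$ to sit in a possibly different coset $t^{\mu'}W_a$ with $\mu'-(\mu+\l)\in(\s_0-1)X_*(T)_\G+Q^\vee$. Decomposing $\xi=\xi_J+\xi_I$ along $V=V^{W_J}\oplus\bigoplus_{j\in J}\RR\a_j^\vee$, the fact that $u$ fixes $V^{W_J}$ pointwise (and that $(u\s_0)$ has no fixed vector on $\bigoplus_{j\in J}\RR\a_j^\vee$) gives $\xi_J^\diamond=v$. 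Writing $\xi=\mu'+q$ with $q\in Q^\vee$ and averaging gives $\mu'^\diamond-v=\xi_I^\diamond-q^\diamond$. Pairing with $\o_c$ for a $\s_0$-orbit $c\subset I(v)$: the $\xi_I^\diamond$-term vanishes since $\<\o_c,\a_j^\vee\>=0$ for $j\in J$, and $\<\o_c,q^\diamond\>=\<\o_c,q\>\in\ZZ$ because $\s_0(\o_c)=\o_c$ and $\<\o_c,Q^\vee\>\subseteq\ZZ$. Finally, $(\s_0-1)X_*(T)_\G$ has vanishing $\s_0$-average, so $\<\o_c,(\mu+\l)^\diamond-v\>\equiv\<\o_c,\mu'^\diamond-v\>\pmod{\ZZ}$, transferring the integrality to the target coset.

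For sufficiency, I reverse-engineer the construction directly in the coset $t^{\mu+\l}W_a$. Choose $u\in W_J$ so that $u\s_0$ has finite order with no fixed vector on $\bigoplus_{j\in J}\RR\a_j^\vee$; a product of $\s_0$-orbit Coxeter elements on the $J$-diagram works. The task reduces to finding $q\in Q^\vee$ so that $\xi:=(\mu+\l)+q$ satisfies $\xi_J^\diamond=v$, equivalently the image of $q$ in $V^{W_J}$ averages to $v-(\mu+\l)_J^\diamond$. The key computation is that the $\s_0$-average image $(\bar Q^\vee)^\diamond$ of the $V^{W_J}$-projection $\bar Q^\vee$ is the $\ZZ$-lattice in $V^{W_J,\s_0}$ cut out exactly by the conditions $\<\o_c,\cdot\>\in\ZZ$ for $\s_0$-orbits $c\subset I$, using $\<\o_c,\bar\a_{c'}^\vee\>=|c|\d_{cc'}$. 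So the existence of $q$ is equivalent to the stated integrality $\<\o_c,\mu^\diamond+\l^\diamond-v\>\in\ZZ$; then $w':=t^\xi u$ lies in $t^{\mu+\l}W_a$ with $\nu_{w',\s_0}=v$.

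The main obstacle is the precise lattice computation identifying $(\bar Q^\vee)^\diamond$ and matching the defect to $\{\<\o_c,\cdot\>\}_{c\subset I}$; this hinges on the dual pairing between $\{\o_c\}_{c\subset I}$ and the $\s_0$-orbit-summed coroots modulo the $J$-coroot span. A secondary difficulty is the Levi-straight reduction used in necessity, which relies on standard straightening of $\s_0$-conjugacy classes in $\tW$ along Newton strata and accounts for the controlled failure of $\s_0$-conjugation to preserve $W_a$-cosets.
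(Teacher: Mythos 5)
Your proof is essentially correct and follows the same overall strategy as the paper: decompose the translation part along $V = V^{W_{J(v)}}\oplus\bigoplus_{j\in J(v)}\RR\a_j^\vee$, observe that $\o_c$ for $c\subset I(v)$ sees only the $V^{W_{J(v)}}$-component, use a $\s_0$-twisted Coxeter element of $W_{J(v)}$ for the sufficiency direction, and reduce the existence question to a $Q^\vee$-integrality condition which you package as a lattice computation. Your characterization of $(\bar Q^\vee)^\diamond$ via the dual pairing with $\{\o_c\}_{c\subset I}$ is correct and is the same calculation the paper performs when it sets $\b=\mu+\l-\sum_c a_c\a_{i_c}^\vee$ and verifies the $\o_c$- and $\a_j$-pairings.

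One remark on the necessity direction: the $\s_0$-conjugation to a ``Levi-straight'' representative is an unnecessary detour. If $\nu_{w\t,\s_0}=v$ and $v$ is dominant and $\s_0$-invariant, then $v$ is fixed by the linear part $x\s_0$ of $w\t\s_0$, hence $x(v)=\s_0^{-1}(v)=v$, so the finite Weyl part $x$ already lies in $W_{J(v)}$ — no conjugation required. Moreover one does not need $(u\s_0)$ to act without fixed vectors on $\bigoplus_{j\in J}\RR\a_j^\vee$: even without ellipticity, the term coming from the $J$-coroot span is annihilated by $\o_c$ for $c\subset I(v)$, which is all that is needed. If you insist on conjugating, you must check that the conjugating element stabilizes $v$ (else the resulting Newton point becomes $y(v)\ne v$); you do correctly account for the change of $W_a$-coset via $(\s_0-1)X_*(T)_\G+Q^\vee$, but the whole step can simply be dropped.
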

\begin{proof}

Suppose $\nu_{w, \s}=v$. We have $w \t=t^\g x$ for some $\g \in X_*$ and $x \in W_0$. By definition, $(w\s)^n=t^{n v}$ for some $n \in \NN$. Since $t^{n v}$ commutes with $w\s=t^\g x\s_0$, we have $x\s_0(v)=x(v)=v$. Thus $x \in W_{J(v)}$. Let $N_0$ be the order of the finite subgroup of $\text{Aff}(V)$ generated by $W_0$ and $\s_0$. Then
\begin{align*} \nu_{w, \s}=\nu_{w \t, \s_0} &=\frac{1}{N_0} \sum_{k=0}^{N_0-1} (x \s_0)^k(\g) \\ &=\frac{1}{N_0} \sum_{k=0}^{N_0-1} (x \Ad(\s_0)(x) \cdots \Ad(\s_0)^{k-1}(x)) \s_0^k(\g) \\ & \in \frac{1}{N_0} \sum_{k=0}^{N_0-1} \s_0^k(\g)+ \sum_{j \in J(v)} \QQ \a_j^\vee \\ &=\g^\clubsuit_{\s_0} + \sum_{j \in J(v)} \QQ \a_j^\vee.\end{align*}

If $w \in t^\mu W_a$, then $w \t \in t^{\mu+\l} W_a$ and $\mu+\l-\g \in \ZZ R^\vee$. Hence $\<\o_c, \mu^\clubsuit_{\s_0}+\l^\clubsuit_{\s_0}-v\>=\<\o_c, \mu^\clubsuit_{\s_0}+\l^\clubsuit_{\s_0}-\g^\clubsuit_{\s_0}\>=\<\o_c, \mu+\l-\g\> \in \ZZ$.

On the other hand, suppose $a_c=\<\o_c, \mu^\clubsuit_{\s_0} + \l^\clubsuit_{\s_0}-v\> \in \ZZ$ for each $\s_0$-orbit $c$ of $I(v)$. We also set $a_c=0$ if $c \nsubseteq I(v)$. We construct an element $w \in t^\mu W_a$ such that $\nu_{w,\s}=v$.

For each $\s_0$-orbit of $J(v)$, we choose a representative. Let $y$ be the product of these representatives (in some order). Then $y$ is a $\s_0$-twisted Coxeter element of $W_{J(v)}$ in the sense of \cite[7.3]{Spr}. For each $\s_0$-orbit $c$ of $I(v)$, we choose a representative $i_c$. Let $\a^\vee_{i_c}$ be the corresponding simple coroot. Set $\b=\mu+\l-\sum_c a_c \a^\vee_{i_c}$ and $w=t^{\b} y \t \i \in t^\mu W_a$. Write $\b=h+r$ with $r \in \sum_{j \in J(v)} \QQ \a_j^\vee$ and $h \in \sum_{i \in I(v)} \QQ \o_i^\vee$. Then
\begin{align*} \nu_{w, \s}=\nu_{w \t, \s_0} &=\frac{1}{N_0} \sum_{k=0}^{N_0-1} (y \s_0)^k(\b) \\ &=h^\clubsuit_{\s_0} + \frac{1}{N_0} \sum_{k=0}^{N_0-1}(y \s_0)^k(r) \\ &=h^\clubsuit_{\s_0}=\mu^\clubsuit_{\s_0}+\l^\clubsuit_{\s_0}-\sum_c a_c (\a^\vee_{i_c})^\clubsuit_{\s_0} - r^\clubsuit_{\s_0},\end{align*} where the fourth equality follows from \cite[Lemma 7.4]{Spr}.

Hence for any $\s_0$-orbit $c$ of $I(v)$ and any $j \in J(v)$, we have $$\<\o_{c}, \mu^\clubsuit_{\s_0}+\l^\clubsuit_{\s_0}-\nu_{w, \s}\>=\<\o_{c}, \sum_{c'} a_{c'} (\a^\vee_{i_{c'}})^\clubsuit_{\s_0}\>=a_{c}$$ and $$\<\a_j, \mu^\clubsuit_{\s_0}+\l^\clubsuit_{\s_0}-\nu_{w,\s}\>=\<\a_j, \mu^\clubsuit_{\s_0}+\l^\clubsuit_{\s_0}\>=\<\a_j, \mu^\clubsuit_{\s_0}+\l^\clubsuit_{\s_0}-v\>,$$ which means $\nu_{w,\s}=v$ as desired.
\end{proof}

\begin{cor}
$\mu^\diamondsuit_{\s}=\mu^\clubsuit_{\s_0} \in B(\tW, \mu, \s)$ if and only if $\<\o_c, \l^\clubsuit_{\s_0}\> \in \ZZ$ for any $\s_0$-orbit $c$ of $I(\mu^\diamondsuit_{\s})$. In this case, $\mu^\diamondsuit_{\s}$ is a priori the unique maximal element of $B(\tW, \mu, \s)$.
\end{cor}

\subsection{} We follow \cite[\S 6]{Chai}. For any $\s_0$-stable subset $B$ of $\mathfrak C$, we define
$$C_{\geq B}=\{v \in \mathfrak C^{\s_0}; v \geq b, \forall b \in B\}.$$ We say $B$ is {\it reduced} if $C_{\geq B} \subsetneq C_{\geq B'}$ for any $\s_0$-stable proper subset $B' \subsetneq B$.

For any $i \in \SS$, let $$pr_{(i)}: V=\RR \o^\vee_i \oplus \sum_{j \neq i} \RR \a^\vee_j \to \RR \o^\vee_i$$ be the projection map.

Now we prove part (1) of Theorem \ref{main'}.

\subsection{Proof of Theorem \ref{main'} (1)} \label{p1}
By \S\ref{adj}, it suffices to consider the case where $\mathfrak R=\mathfrak R_{ad}$.

For any $i \in \SS$, let $c$ denote the $\s_0$-orbit of $i$, and define $e_i \in \QQ \o_i^\vee$ by $$\<\o_i, e_i\>=\frac{1}{\# c}\max(\{t \in \<\o_c, \mu^\clubsuit_{\s_0}+\l^\clubsuit_{\s_0}\> + \ZZ; t \leq \<\o_c, \mu^\clubsuit_{\s_0}\> \} \cup \{0\}).$$ Let $E_0=\{e_i; i \in \SS\}$. It is easy to prove by induction on the number of $\s_0$-orbits on $E_0$ that there exists a $\s_0$-stable subset $E$ of $E_0$ which is reduced and satisfies $C_{\geq E}=C_{\geq E_0}$. Let $I(E)=\{i \in \SS; e_i \in E\}$. By \cite[Theorem 6.5]{Chai}\footnote{In fact, we use here a ``$\s_0$-fixed'' version of \cite[Theorem 6.5]{Chai}, which can be proved in the same way as in loc.cit.}, there exists an element $\nu \in C_{\geq E}$ defined by $I(\nu)=I(E)$ and $\<\o_j, \nu\>=\<\o_j, e_j\>$ for $j \in I(E)$, which satisfies $C_{\geq \nu}=C_{\geq E}=C_{\geq E_0}$. Since $\mu^\diamondsuit_\s=\mu^\clubsuit_{\s_0} \in C_{\geq E_0}$, we have $\nu \leq \mu^\diamondsuit_\s$. By Lemma \ref{Newton}, $\nu \in B(\tW, \mu, \s)$.

Since $\nu \in C_{\geq E_0}$, $\nu \geq e_i$ for any $i \in \SS$. Therefore, for any $\s_0$-orbit $c$ of $\SS$, we have \begin{align*}\tag{a} \<\o_c, \mu^\clubsuit_{\s_0}\> \geq \<\o_c, \nu\> \geq \sum_{j \in c} \<\o_j, e_j\> \geq \<\o_c, \mu^\clubsuit_{\s_0}+\l^\clubsuit_{\s_0}\>-\rc \<\o_c, \l^\clubsuit_{\s_0}\> \lc, \end{align*} where the last inequality follows from our definition of $e_j$ for $j \in \SS$.

Let $\nu' \in B(\tW,\mu, \s)$. Set $E(\nu')=\{pr_{(j)}(\nu'); j \in I(\nu')\}$. By Lemma \ref{Newton} and the inequality $\nu' \leq \mu^\diamondsuit_\s=\mu^\clubsuit_{\s_0}$, we have, for any $\s_0$-orbit $c$ of $I(\nu')$ and $j \in c$, that $$\#{c} \cdot \<\o_j, pr_{(j)}(\nu')\>=\#{c} \cdot \<\o_j, \nu'\>=\<\o_c, \nu'\> \in \<\o_c, \mu^\clubsuit_{\s_0} +\l^\clubsuit_{\s_0}\> + \ZZ$$ and $$\# c \cdot \<\o_j, pr_{(j)}(\nu')\> \leq \# c \cdot \<\o_j, \mu^\clubsuit_{\s_0}\>=\<\o_c, \mu^\clubsuit_{\s_0}\>.$$ So $\<\o_j, pr_{(j)}(\nu')\> \leq \<\o_j, e_j\>$, that is, $pr_{(j)}(\nu') \leq e_j \leq \nu$ for $j \in I(\nu')$. By \cite[Lemma 6.2 (i)]{Chai}, we deduce that $\nu' \leq \nu$. Therefore $\nu$ is the unique maximal element of $B(\tW,\mu, \s)$.

\section{Reduction to the superbasic case}

\subsection{}\label{4.1} In the rest of the paper we prove Theorem \ref{main'} (2), beginning in this section with a reduction step to the superbasic case.

For any element $w \s^i$ with $w \in \tW$ and $i \in \ZZ$, we put $\ell(w \s^i)=\ell(w)$. This is well-defined since $\s(\aa)=\aa$.

Let $\e=w \s^i$ with $\ell(\e)=0$. Then the conjugation action of $\e$ on $\tW$ sends simple reflections to simple reflections. We say that $\e$ is {\it superbasic} (for $\tW$) if each $\Ad(\e)$-orbit on $\tSS$ is a union of connected components of the affine Dynkin diagram of $\tW$. By \cite[3.5]{HN}, $\e$ is superbasic if and only if $W_a=W_1^{m_1} \times \cdots \times W_l^{m_l}$, where each $W_i$ is an extended affine Weyl group of type $\tilde A_{n_i-1}$ and $\e$ gives an order $n_i m_i$ permutation on the set of simple reflections of $W_i^{m_i}$.

\subsection{} Let $J \subseteq \SS$. Let $\tW_J=X_* \rtimes W_J$ be the corresponding parabolic subgroup of $\tW$. This is the extended affine Weyl group associated to the root datum $\mathfrak R_J=(X^*, R_J, X_*, R^\vee_J, \Pi_J)$, where $\Pi_J$ is the subset of simple roots corresponding to $J$ and $R_J \subseteq R$ is the set of roots spanned by $\Pi_J$. Set $R^+_J=R_J \cap R^+$. Let $\aa_J=\{v \in V; 0 < \< \a, v\> < 1 \text{ for every } \a \in R^+_J\}$ be the base alcove associated to $\tW_J$.

The set of positive affine roots $\tilde R_J$ for $\tW_J$ is $\{\a+k; \a \in R^+_J, k \geq 1\} \cup \{-\a+k; \a \in R^+_J, k \geq 0\}$. The affine simple roots for $\tW_J$ are $-\a$ for $\a \in \Pi_J$ and $\b+1$, where $\b$ runs over maximal positive roots in $R_J^+$. Note that the positive roots in $R_J$ are {\it not} positive as affine roots in $\tilde R_J$.

We denote by $\le_J$ and $\ell_J$ the Bruhat order and length function on $\tW_J$. Although $\tW_J$ is a subgroup of $\tW$, $\le_J$ and $\ell_J$ can be quite different from the restrictions of $\le$ and $\ell$ to $\tW_J$.

\subsection{}\label{4.2}

In the rest of this section, we assume that $\mathfrak R=\mathfrak R_{ad}$. We take $\t \in \Omega$ and $\s_0=\t \i \s$ as in \S\ref{adjoint}. Recall that $\l$ is the dominant cocharacter with $\t \in t^\l W_0$.

We will associate to $\s$ a superbasic element for a parabolic subgroup of $\tW$ and reduce Theorem \ref{main'} (2) to the superbasic case.

We follow the approach in \cite[\S 5]{HN2}.

Let $V^\s$ be the fixed point set of $\s$. Since $\s$ is an affine transformation on $V$ of finite order, $V^\s$ is a nonempty affine subspace. Set $V'=\{v-e; v \in V^\s\}$, where $e$ is an arbitrary point of $V^\s$. Then $V'$ is the (linear) subspace of $V$ parallel to $V^\s$. We choose a generic point $v_0$ of $V'$, i.e., for any root $\a \in R$, $\<\a, v_0\>=0$ implies that $\<\a, v'\>=0$ for all $v' \in V'$. We set $I=I(\bar v_0)$, $J=J(\bar v_0)$ and $\s^J=z \s z \i \in \tW \s$, where $z$ is the unique element in ${}^J W_0$ with $\bar v_0=z(v_0)$.

\begin{lem} \label{prep}
(1) The set $J$ is stable under $\s_0$-conjugation.

(2) $z(\l)^\clubsuit_{\s_0} \in \QQ R_J^\vee$.

(3) The element $\s^J$ is a superbasic element for $\tW_J$.
\end{lem}

\begin{proof}
(1) By definition, $\s(0)=\l$. Hence $\s(v_0)=v_0+\l$ and $\s^J(\bar v_0)=\bar v_0+z(\l)$. Write $\s^J$ as $\s^J=t^{z(\l)}u \s_0$ for some $u \in W_0$. Then $u\s_0(\bar v_0)=\bar v_0$. Therefore $\s_0(\bar v_0)=u\i(\bar v_0)$ is the unique dominant element in the $W_0$-orbit of $v_0$. Hence $\bar v_0=\s_0(\bar v_0)=u \i(\bar v_0)$. Therefore $u \in W_J$ and $\Ad(\s_0)(J)=J$.

(2) Since $\s^J$ is of finite order, we have $(\s^J)^m=1$ for some $m \in \NN$. On the other hand, using the expression $\s^J=t^{z(\l)}u \s_0$, one computes that $(\s^J)^m=t^{\sum_{k=0}^{m-1} (u\s_0)^k(z(\l))}=1$. So $\sum_{k=0}^{m-1} (u\s_0)^k(z(\l))=0$. Since $u \in W_J$ and $\s_0(R_J^\vee)=R_J^\vee$, we have $(u\s_0)^k(z(\l))-\s_0^k(z(\l)) \in \ZZ R_J^\vee$ for $k \in \ZZ$. Thus $(z(\l))^\clubsuit_{\s_0} \in \frac{1}{m}\sum_{k=0}^{m-1} (u\s_0)^k(z(\l)) + \QQ R_J^\vee=\QQ R_J^\vee$.

(3) Since $\s_0$ stabilizes $\aa_J$, the length function $\ell_J$ on $\tW_J$ extends to the subgroup of $\text{Aff}(V)$ generated by $\tW_J$ and $\s_0$ via the usual rule $\ell_J(w \s_0^i)=\ell_J(w)$ for $w \in \tW_J$ and $i \in \ZZ$. Since $z\i(R_J^+) \subseteq R^+$, we have $z(\aa) \subseteq \aa_J$. In other words, $\aa_J$ is the unique alcove associated to $\tilde W_J$ that contains $z(\aa)$. Since $\s^J(z(\aa))=z(\aa)$, $\s^J(\aa_J)$ is also the unique alcove associated to $\tilde W_J$ that contains $z(\aa)$. Therefore $\s^J(\aa_J)=\aa_J$.

Since $v_0$ is generic in $V'$, $\bar v_0=z(v_0)$ is generic in $z(V')$. So each point of $z(V')$ is fixed by $W_J$. Therefore, for any $\tilde \a \in \tilde R_J$, either $V^{\s^J} \cap H_{\tilde \a} = \emptyset$ or $V^{\s^J} \subseteq H_{\tilde \a}$, where $V^{\s^J}=z(V^\s)$ is the fixed-point set of $\s^J$ on $V$. Since $\s^J(\aa_J)=\aa_J$ and $\s^J$ is of finite order, $\aa_J$ contains a fixed point of $\s^J$. So $V^{\s^J} \nsubseteq H_{\tilde \a}$ and hence $V^{\s^J} \cap H_{\tilde \a}=\emptyset$. By \cite[Proposition 3.5]{HN} (for $\tW_G:=\tW_J, J_\co:=J$ and $y=1$), $\s^J$ is superbasic for $\tW_J$.
\end{proof}

\begin{lem}\label{integer} Let $c$ be a $\s_0$-orbit of $\SS$. Then $\<\o_c, \l^\clubsuit_{\s_0}\> \in \ZZ$ if $c \subseteq I$.
\end{lem}
\begin{proof}
Write $\l=z(\l)+\th$ for some $\th \in \ZZ R^\vee$. We have $$\<\o_c, \l^\clubsuit_{\s_0}\>=\<\o_c, z(\l)^\clubsuit_{\s_0}\> + \<\o_c, \th\> \equiv \<\o_c, z(\l)^\clubsuit_{\s_0}\> \mod \ZZ.$$
By Lemma \ref{prep} (2), $\<\o_c, z(\l)^\clubsuit_{\s_0}\>=0$ if $c \subseteq I$. The proof is finished.
\end{proof}

\begin{prop}\label{image}
The maximal Newton point of $B(\tW, \mu, \s)$ is contained in the natural inclusion $B(\tW_J, \mu, \s^J) \hookrightarrow B(\tW, \mu, \s)$.
\end{prop}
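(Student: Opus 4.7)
The plan is to prove Proposition~\ref{image} by showing that the explicit maximal element $\nu$ of $B(\tW,\mu,\s)$ constructed in the proof of Theorem~\ref{main'}(1) coincides with the $W_J$-invariant projection $(\mu^\diamond)^I$ of $\mu^\diamond$ onto $V^{W_J}=\sum_{i\in I}\RR\o_i^\vee$ along $V_J=\sum_{j\in J}\RR\a_j^\vee$, and then to realize this element through the natural inclusion from $B(\tW_J,\mu,\s^J)$.

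First I would check that $(\mu^\diamond)^I\in B(\tW,\mu,\s)$. The difference $\mu^\diamond-(\mu^\diamond)^I$ lies in $V_J$ and, via the standard $W_J$-averaging description of the projection, is a non-negative rational combination of the $\a_j^\vee$ for $j\in J$; in particular $(\mu^\diamond)^I\le\mu^\diamond$. Since $I((\mu^\diamond)^I)\subset I$ and, by Lemma~\ref{integer}(2), $\<\o_c,\l^\diamond\>\in\ZZ$ for every $\s_0$-orbit $c\subset I$, while $\mu^\diamond-(\mu^\diamond)^I\in V_J$ pairs trivially with $\o_c$ for such $c$, the integrality hypothesis of Lemma~\ref{Newton} is satisfied: $\<\o_c,\mu^\diamond+\l^\diamond-(\mu^\diamond)^I\>=\<\o_c,\l^\diamond\>\in\ZZ$. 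Hence $(\mu^\diamond)^I\in B(\tW,\mu,\s)$, so $\nu\ge(\mu^\diamond)^I$.

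The main step is the reverse inequality, for which it suffices to show $I(\nu)\subset I$; combined with $\nu\ge(\mu^\diamond)^I$ and $\nu\le\mu^\diamond$, pairing with $\o_i$ for $i\in I$ then forces $\nu=(\mu^\diamond)^I$. To establish $I(\nu)\subset I$, I would analyze the reduced $\s_0$-stable subset $E\subset E_0=\{e_i;i\in\SS\}$ defining $\nu$ via Chai's construction. For $c\subset I$, the integrality $\<\o_c,\l^\diamond\>\in\ZZ$ forces $\<\o_i,e_i\>=\<\o_i,\mu^\diamond\>$, so $(\mu^\diamond)^I$ realizes these constraints exactly. For $c\subset J$, the non-integrality from Lemma~\ref{integer}(2) forces $\<\o_i,e_i\>$ to be strictly less than $\<\o_i,\mu^\diamond\>$ by an explicit fractional gap determined by $\l$. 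The main technical obstacle is to verify that this gap is large enough that $(\mu^\diamond)^I\ge e_j$ for every $j\in J$, so that each $e_j$ is redundant in the reduced set $E$ and $I(\nu)=I(E)\subset I$ follows. This verification hinges on the structural input extracted in the proof of Lemma~\ref{integer}(2), namely that $\tW_J$ has only type $A$ factors because $\s^J$ is superbasic.

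Once $\nu=(\mu^\diamond)^I$ is established, $\nu$ is $W_J$-fixed. Applying the analog of Lemma~\ref{Newton} inside $\tW_J$ with twisting $\s^J$: since $I^J(\nu)=\emptyset$, the integrality hypothesis is vacuous, and the explicit construction from the proof of Lemma~\ref{Newton} produces an element $w\in t^\mu(W_a\cap\tW_J)$ with $\s^J$-Newton point equal to $\nu$. The $W_J$-dominance inequality follows from $\mu^\diamond-\nu\in V_J\cap\sum_{j\in J}\QQ_{\ge0}\a_j^\vee$, using that $\s^J$ acts on $X_*(T)$ through $\s_0$ so the analog of $\mu^\diamond$ inside $\tW_J$ coincides with $\mu^\diamond$ itself. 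Hence $\nu\in B(\tW_J,\mu,\s^J)$ and, under the natural inclusion, maps to the maximal element of $B(\tW,\mu,\s)$, completing the proof.
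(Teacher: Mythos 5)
Your strategy diverges from the paper's in a way that introduces a genuine gap. The paper does \emph{not} claim that $I(\nu)\subset I$; instead, it explicitly allows $I(\nu)\cap J$ to be nonempty and verifies the integrality hypothesis of Lemma~\ref{Newton} for $\tW_J$ directly at each $\s_0$-orbit $c'\subset I(\nu)\cap J$, using Lemma~\ref{integer}(1) (so $z(\l)^\diamond\in\sum_{j\in J}\QQ\a_j^\vee$) together with $\th=\l-z(\l)\in X_*(T_{sc})_\G$ to reduce $\<\o_{c'}^J,\mu^\diamond+z(\l)^\diamond-\nu\>$ to $\<\o_{c'},\mu^\diamond+\l^\diamond-\nu\>$ modulo $\ZZ$, which is an integer by Lemma~\ref{Newton} applied to $\nu\in B(\tW,\mu,\s)$.

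The step you flag as the ``main technical obstacle'' --- establishing $I(\nu)\subset I$, equivalently $\nu=(\mu^\diamond)^I$ --- is in fact false in general, and the paper's proof is designed precisely to avoid needing it. Consider the extreme (but permitted) case where $\s$ is already superbasic on a connected adjoint factor: then $J=\SS$, $I=\emptyset$, and $(\mu^\diamond)^I=0$ since $G$ is adjoint. Yet the maximal Newton point of $B(\tW,\mu,\s)$ is typically nonzero --- the example in Section~5 with $n=8$, $m=5$, $\mu=(1,1,1,0,0,0,0,0)$ produces a nonzero $\nu$ after passing to $PGL_8$. The mechanism is visible in the formula for $e_i$: for $c\subset J$ Lemma~\ref{integer}(2) gives $\<\o_c,\l^\diamond\>\notin\ZZ$, which forces a strict drop $\<\o_i,e_i\><\<\o_i,\mu^\diamond\>$, but this drop is at most $1/\#c$ and certainly does not force $e_i\le 0$ or $e_i\le(\mu^\diamond)^I$ when $\<\o_i,\mu^\diamond\>$ is large. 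So $I(\nu)\cap J\neq\emptyset$ in general, and your plan to make the integrality condition inside $\tW_J$ vacuous cannot be completed. The first paragraph (showing $(\mu^\diamond)^I\in B(\tW,\mu,\s)$, hence $\nu\ge(\mu^\diamond)^I$) and the observation $\mu^\diamond-\nu\in\sum_{j\in J}\QQ\a_j^\vee$ are correct and do agree with the paper, but the rest of the argument must instead check integrality on $I(\nu)\cap J$ as the paper does.
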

\begin{proof}
For any $j \in J$, we denote by $\o_j^J$ the fundamental weight corresponding to $j$ in the root datum $\mathfrak R_J$. We set $\o_c^J=\sum_{j \in c} \o_j^J$ for any $\s_0$-orbit of $c$ of $J$. Let $\nu$ be the maximal Newton point of $B(\tW, \mu, \s)$.

Let $c$ be a $\s_0$-orbit of $I$. By Lemma \ref{integer}, $\<\o_c, \l^\clubsuit_{\s_0}\> \in \ZZ$. Applying \S \ref{p1} (a), we see that $\<\o_c, \mu^\clubsuit_{\s_0}\>=\<\o_c, \nu\>$ and $\mu^\clubsuit_{\s_0} - \nu \in \QQ R_J^\vee$.

By Lemma \ref{prep} (2), $z(\l)^\clubsuit_{\s_0} \in \QQ R_J^\vee$. Thus $\mu^\clubsuit_{\s_0}+z(\l)^\clubsuit_{\s_0}-\nu \in \QQ R_J^\vee$. Now let $c'$ be a $\s_0$-orbit in $I(\nu) \cap J$. Then
$$\<\o_{c'}^J, \mu^\clubsuit_{\s_0}+z(\l)^\clubsuit_{\s_0}-\nu\> = \<\o_{c'}, \mu^\clubsuit_{\s_0}+z(\l)^\clubsuit_{\s_0}-\nu\>=\<\o_{c'}, \mu^\clubsuit_{\s_0}+\l^\clubsuit_{\s_0}-\nu \> - \<\o_{c'}, \th\>,$$ where $\th=\l-z(\l) \in \ZZ R^\vee$. By Lemma \ref{Newton} $\<\o_{c'}, \mu^\clubsuit_{\s_0}+\l^\clubsuit_{\s_0}-\nu\> \in \ZZ$. Hence $\<\o_{c'}^J, \mu^\clubsuit_{\s_0}+z(\l)^\clubsuit_{\s_0}-\nu\> \in \ZZ$. Again by Lemma \ref{Newton}, we have $\pi_J(\nu) \in B((\tW_J)_{ad}, \pi_J(\mu), (\s^J)_{ad})$, where $\pi_J$ and (resp. $(\s^J)_{ad}$) is defined similarly as $\pi$ (resp. $\s_{ad}$) in \S \ref{0-1-1} with $\mathfrak R$ and $\s$ replaced by $\mathfrak R_J$ and $\s^J$ respectively. Since $\mu^\clubsuit_{\s_0}+z(\l)^\clubsuit_{\s_0}-\nu \in \QQ R_J^\vee$ and $$\pi_J: B(\tW_J, \mu, \s^J) \to B((\tW_J)_{ad}, \pi_J(\mu), (\s^J)_{ad})$$ is a bijection of posets, we deduce that $\nu \in B(\tW_J, \mu, \s^J)$ as desired.
\end{proof}

\begin{lem}
Let $K \subseteq \SS$ and $z \in {}^K W_0$. If $w, w' \in \tW_K$ with $w \le_K w'$ for the Bruhat order of $\tW_K$, then $z \i w z \le z \i w' z$ for the Bruhat order of $\tW$.
\end{lem}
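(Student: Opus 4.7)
The plan is to refine the Bruhat inequality $w \le_J w'$ into a chain of reflection-steps in $\tW_J$ and then transport each step by conjugation. Recall the standard reflection characterization of Bruhat order in any Coxeter group (which extends to $\tW_J$ and $\tW$ on each $\Omega$-coset): $u < v$ iff $v = u t$ for some reflection $t$ with $\ell(v) > \ell(u)$, and $\le$ is the transitive closure. Applying this inside $\tW_J$ gives a chain $w = w_0 <_J w_1 <_J \cdots <_J w_n = w'$ with $w_{i+1} = w_i t_i$, each $t_i$ a reflection of $\tW_J$ (hence also a reflection of $\tW$ since the root system of $\tW_J$ embeds in that of $\tW$) and $\ell_J(w_{i+1}) > \ell_J(w_i)$. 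It therefore suffices to prove $z \i w_i z < z \i w_{i+1} z$ in $\tW$ for each $i$.

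The key ingredient is a positivity transport: $z \i$ sends positive affine roots of $\tW_J$ to positive affine roots of $\tW$. At the finite level this is standard, since $z \in {}^J W_0$ implies $z \i(\a_j) > 0$ for every simple root $\a_j$ of $W_J$, and a positive root is a non-negative combination of simple roots. At the affine level, positive affine roots take the form $\a + n \delta$ with either $n > 0$ or ($n = 0$ and $\a$ finite positive); since $z \in W_0$ fixes $\delta$, the first case is automatic and the second reduces to the finite level. Hence, if $t_i$ has positive affine root $\beta_i$, then $z \i t_i z$ is the reflection of $\tW$ with positive root $z \i(\beta_i)$.

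Now invoke the reflection criterion $\ell(u r) > \ell(u) \iff u(\a_r) > 0$ inside $\tW$: one computes
\[
(z \i w_i z)(z \i(\beta_i)) = z \i(w_i(\beta_i)).
\]
The hypothesis $\ell_J(w_{i+1}) > \ell_J(w_i)$ says $w_i(\beta_i)$ is a positive root of $\tW_J$, and the positivity transport then gives that $z \i(w_i(\beta_i))$ is positive in $\tW$. Hence $\ell(z \i w_{i+1} z) > \ell(z \i w_i z)$, so $z \i w_i z < z \i w_{i+1} z$ in $\tW$; chaining the covering steps yields the claim. The one point requiring care is verifying positivity transport at the affine level, but this is immediate because $z \in W_0$ acts trivially on the imaginary direction; everything else is routine invocation of the reflection criterion for Bruhat order.
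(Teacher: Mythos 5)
Your proof is correct and follows essentially the same route as the paper: reduce to a single covering step $w' = w s_\a$, observe that $z^{-1}$ carries positive affine roots of $\tW_J$ to positive affine roots of $\tW$ (because $z \in {}^J W_0$), and then apply the reflection criterion for Bruhat order to the conjugated pair. The only difference is that you spell out the chain of covering steps and the affine-vs-finite positivity check explicitly, where the paper leaves these as understood.
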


\begin{proof}
By the definition of Bruhat order, there exist positive affine roots $\tilde \a_1, \cdots, \tilde \a_k$ of $\tW_K$ such that $$w \le_K w s_{\tilde \a_1} \le_K w s_{\tilde \a_1} s_{\tilde \a_2} \le_K \cdots \le_K w s_{\tilde \a_1} \cdots s_{\tilde \a_k}=w'.$$

Hence for any $i$, $w s_{\tilde \a_1} \cdots s_{\tilde \a_i}(\tilde \a_{i+1})$ is a positive affine root of $\tW_K$.

Notice that $z \i$ sends positive affine roots of $\tW_K$ to positive affine roots of $\tW$. Set $\tilde \b_i=z \i(\tilde \a_i)$. This is a positive affine root of $\tW$. Moreover, $(z \i w z) s_{\tilde \b_1} \cdots s_{\tilde \b_i}(\tilde \b_{i+1})=z\i w s_{\tilde \a_1} \cdots s_{\tilde \a_i} (\tilde \a_{i+1})$ is a positive affine root of $\tW$. Thus $$z \i w z \le z \i w z s_{\tilde \b_1} \le z \i w z s_{\tilde \b_1} s_{\tilde \b_2} \le \cdots \le z \i w z s_{\tilde \b_1} \cdots s_{\tilde \b_k}=z \i w' z.$$
\end{proof}

\begin{cor}\label{super}
If Theorem \ref{main'} (2) holds for $B(\tW_J, \mu, \s^J)$, then it holds for $B(\tW, \mu, \s)$.
\end{cor}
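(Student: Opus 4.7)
The plan is to transport the hypothesized admissible representative in $\tW_J$ back to an element of $\Adm(\mu) \subset \tW$ by conjugating with $z^{-1}$. The two key inputs have already been assembled: Proposition~\ref{image} identifies the maximal Newton point of $B(\tW, \mu, \s)$ with an element of $B(\tW_J, \mu, \s^J)$ under the natural inclusion, and the lemma immediately above Corollary~\ref{super} shows that conjugation by $z^{-1}$ carries Bruhat inequalities in $\tW_J$ to Bruhat inequalities in $\tW$.

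In detail, I first let $\nu$ denote the maximal Newton point of $B(\tW, \mu, \s)$, which exists by Theorem~\ref{main'}(1). By Proposition~\ref{image}, $\nu$ also lies in $B(\tW_J, \mu, \s^J)$, and since the inclusion is a map of posets, $\nu$ is necessarily maximal there as well. The hypothesis then produces some $w \in \tW_J$ with $\bar\nu_{w, \s^J} = \nu$ and $w \le_J t^{x(\underline{\mu})}$ for some $x$ in the finite Weyl group $W_J$ of $\tW_J$. I then set $w' := z^{-1} w z \in \tW$ and verify the three required properties. For the Newton point, since $\s^J = z \s z^{-1}$ one has $(w'\s)^n = z^{-1} (w \s^J)^n z$ inside $\tW \rtimes \<\s\>$; so if $(w \s^J)^n = t^\lambda$ then $(w'\s)^n = t^{z^{-1}(\lambda)}$, giving $\nu_{w', \s} = z^{-1}(\nu_{w, \s^J})$, which lies in the same $W_0$-orbit as $\nu_{w, \s^J}$ and hence dominates to $\nu$. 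For the coset condition $w' \in t^\mu W_a$, I use that $W_a$ is normal in $\tW$ and that $z^{-1}(\mu) - \mu$ lies in the coroot lattice $X_*(T_{sc})_\G$, so conjugation by $z^{-1}$ preserves $t^\mu W_a$. For admissibility, the preceding lemma yields $w' \le z^{-1} t^{x(\underline{\mu})} z = t^{z^{-1} x(\underline{\mu})}$, and $z^{-1} x \in W_0$ places $w'$ in $\Adm(\mu)$.

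I do not anticipate a serious obstacle: the substantive content has already been packaged into Proposition~\ref{image} and the Bruhat-order lemma above, and the rest is formal manipulation. The only points requiring minor care are the verification that $(w'\s)^n = z^{-1}(w \s^J)^n z$ really follows from $\s^J = z \s z^{-1}$ (a direct calculation in the semidirect product $\tW \rtimes \<\s\>$), and the bookkeeping that elements $x \in W_J$ appearing in the $\tW_J$-admissibility condition yield elements $z^{-1} x \in W_0$ suitable for $\Adm(\mu)$ — both of which are routine once the definitions are unpacked.
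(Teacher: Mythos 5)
Your proof is correct and follows essentially the same route as the paper: identify the common maximal Newton point via Proposition~\ref{image}, take the admissible representative $w_1 \in \tW_J$ with $w_1 \le_J t^{x_1(\underline\mu)}$ supplied by the hypothesis, conjugate by $z^{-1}$ to get $w = z^{-1}w_1 z$ and $x = z^{-1}x_1$, and invoke the Bruhat-order lemma to conclude $w \le t^{x(\underline\mu)}$. The only difference is that you spell out the routine verifications of the Newton-point and coset conditions, which the paper states without comment.
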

\begin{proof}
Let $\nu$ be the maximal Newton point of $B(\tW, \mu, \s)$, which is also the maximal Newton point of $B(\tW_J, \mu, \s^J)$ by Proposition \ref{image}. By assumption, there exist $w_1 \in t^\mu (W_a \cap \tW_J)$ and $x_1 \in W_J$ such that $\bar \nu_{w_1, \s^J}^J=\nu$ and $w_1 \le_J t^{x_1(\mu)}$, where $\bar \nu^J_{w_1, \s^J}$ stands for the Newton point of $w_1 \in \tW_J$ defined with respect to the $\s^J$-conjugation action on $\tW_J$. Let $z$ be the element defined in \S \ref{4.2}. Let $w=z\i w_1 z$ and $x=z\i x_1$. Then we have $\bar \nu_{w, \s}=\nu$, $w \in t^\mu W_a$ and $w \le t^{x(\mu)}$ as desired.
\end{proof}

\section{Reduction to the irreducible case}

\subsection{}\label{3.1} In this section, we assume that $\mathfrak R=\mathfrak R_{ad}$  and $\s$ acts transitively on the set of connected components of the affine Dynkin diagram of $W_a$. In other words, $\tW=\tW_1 \times \cdots \times \tW_m$, where $\tW_1 \cong \cdots \cong \tW_m$ are extended affine Weyl groups of adjoint type with connected affine Dynkin diagram and $\Ad(\s)(\tW_1)=\tW_2, \cdots, \Ad(\s)(\tW_m)=\tW_1$. Let $W_i$ be the finite Weyl group associated to $\tilde W_i$.

As in \S \ref{adjoint}, we write $\s$ as $\s=\t \s_0$ with $\t \in \Omega$ and $\Ad(\s_0)(\SS)=\SS$. Write $\mu$ as $\mu=(\mu_1, \cdots, \mu_m)$, where each $\mu_i$ is a dominant cocharacter for $\tW_i$. Let $y=(w_1, \dots, w_m) \in \tW$. Then the $m$-th component of $(y\s)^m \s^{-m} \in \tW$ is $w_m\cdots \Ad(\s^{m-1})(w_1)$. The map $\bar \nu_{(w_1, \dots, w_m), \s} \mapsto \bar \nu_{w_m\cdots \Ad(\s^{m-1})(w_1), \s^m}$ induces a natural surjection from $B(\tW, \mu, \s)$ to $B(\tW_m, \g, \s^m)$, where $\g=\sum_{i=1}^m \s_0^{m-i} (\mu_i)$. Since the elements in $B(\tW, \mu, \s)$ are $\s_0$-invariant, it is in fact a bijection, whose inverse is given by $v \mapsto \frac{1}{m}(\s_0(v), \s_0^2(v), \cdots, v)$. It is easy to see this bijection is a bijection of posets.

\begin{lem}\label{irreducible}
If Theorem \ref{main'} (2) holds for $(\tW_m, \g, \s^m)$, then it holds for $(\tW, \mu, \s)$.
\end{lem}
\begin{proof}
Let $\nu$ be the maximal element in $B(\tW_m, \g, \s^m)$. Then the maximal element in $B(\tW, \mu, \s)$ is $\frac{1}{m}(\s_0(\nu), \cdots, \nu)$. By assumption, there exists $w \in \Adm(\g)$ such that $\bar \nu_{w, \s^m}=\nu$. By definition, there exists $x \in W_m$ such that $w \le t^{x(\g)}$. Since $\ell(t^{x(\g)})=\sum_{i=1}^m \ell(t^{x(\s_0^{m-i}(\mu_i))})$, there exists $w_i \in \tW_m$ for each $i$ such that $w=w_m \cdots w_1$ and $w_i \le t^{x(\s_0^{m-i}(\mu_i))}$ for all $i$. It is easy to see that $\s^{i-m}=\t'_i \s_0^{i-m}$ for some $\t'_i \in \tW$. Hence \begin{align*} \Ad(\s)^{i-m}(w_i) & \le \Ad(\s)^{i-m}(t^{x(\s_0^{m-i}(\mu_i))}) \\ &=\Ad(\t'_i) \Ad(\s_0)^{i-m} (t^{x(\s_0^{m-i}(\mu_i))}) \\ &= t^{x_i(\mu_i)} \end{align*} for some $x_i \in W_i$. Set $y=(\Ad(\s)^{1-m}(w_1), \cdots, w_m) \in \tW$. Then $y \in \Adm(\mu)$. Notice that the $m$-th component of $(y \s)^m \s^{-m}$ is $w_m \cdots w_1=w$. Hence $\bar \nu_{y, \s}=\frac{1}{m}(\s_0(\nu), \cdots, \nu)$.
\end{proof}

\section{The irreducible superbasic case}

\subsection{} In this section, we consider the extended affine Weyl group $\tW=\ZZ^n \rtimes \fs_n$ of type $\tilde A_{n-1}$, where $\fs_n$ is the permutation group of $\{1, 2, \dots, n\}$ which acts on $\ZZ^n \cong \oplus_{i=1}^n \ZZ e_i^\vee$ by $w(e_i^\vee)=e_{w(i)}^\vee$ for $w \in \fs_n$. Let $\{e_i\}_{i=1, \cdots, n}$ be the dual basis. Set $d=\sum_{i=1}^n e_i$ and $d^\vee=\sum_{i=1}^n e_i^\vee$. The simple roots, fundamental weights and fundamental coweights are given by $\a_i=e_i-e_{i+1}$, $\o_{i,n}=-\frac{i}{n}d+\sum_{j=1}^i e_j$ and $\o_{i,n}^\vee=-\frac{i}{n}d^\vee+\sum_{j=1}^i e_j^\vee$ respectively for $i$ with $1 \le i \le n-1$. Then $\tW_{ad}=(\oplus_{i=1}^{n-1} \ZZ \o_{i, n}^\vee) \rtimes \fs_n$, see \S \ref{adj}.

Set $\varpi_{m,n}=\sum_{j=1}^m e_j^\vee$. The map $\pi$ in \S\ref{adj} can be described explicitly as the $\QQ$-linear projection  $\pi: \QQ^n \to \QQ R^\vee \subseteq \QQ^n$ such that $d^\vee \mapsto 0$ and $\varpi_{m,n} \mapsto \o_{m,n}^\vee$ for $m$ with $1 \le m \le n-1$. We also denote the induced (surjective) projection $\tW \to \tW_{ad}$ by $\pi$.

For any positive integer $m<n$, let $\s_{m, n}=t^{\varpi_{m,n}} u_{m,n} \in t^{\varpi_{m,n}} \fs_n$ be the unique length zero element with $u_{m, n} \in \fs_n$. Then any superbasic element in $\tW_{ad}$ is of the form $\pi(\s_{m, n})$ for some positive integer $m<n$ co-prime to $n$. 

The main purpose of this section is to prove the following result.

\begin{prop}\label{superbasic}
Let $m<n$ be a positive integer co-prime to $n$. Let $\mu \in \oplus_{i=1}^n \ZZ e_i^\vee$ be a dominant cocharacter. Then there exist $\tw \in \tW$ and $x \in \fs_n$ such that $\tw \le t^{x(\mu)}$ and $\pi(\bar \nu_{\tw, \s_{m,n}})=\bar \nu_{\pi(\tw), \pi(\s_{m, n})}$ equals the unique maximal Newton point $\nu$ of $B(\tW_{ad}, \pi(\mu), \pi(\s_{m,n}))$.
\end{prop}

The proof will be given in \S \ref{proof-5.1}.

\subsection{} We first show that Proposition \ref{superbasic} implies Theorem \ref{main'} (2) for any triple $(\tW_1, \mu_1, \s_1)$.

Let $\mathfrak R$ be the root datum of $\tW_1$. By \S\ref{adj}, we may assume $\mathfrak R=\mathfrak R_{ad}$. By Corollary \ref{super}, it suffices to prove Theorem \ref{main'} (2) for $(\tW_2, \mu_2, \s_2)$, where $\s_2$ is a superbasic element in $\tW_2$. By \S\ref{adj} again, we may assume that the root datum of $\tW_2$ is adjoint. By \S \ref{4.1}, we may assume furthermore that $\tW_2=\tW_3^m$, where $\tW_3$ is the extended affine Weyl group of an adjoint root datum of type $A$ and $\s_2$ acts transitively on the set of affine simple reflections of $\tW_2$. By Lemma \ref{irreducible}, it suffices to prove Theorem \ref{main'} (2) for $(\tW_3, \mu_3, \s_3)$, where $\s_3=\s_2^m$ is a superbasic element in $\tW_3$. This case follows from Proposition \ref{superbasic}.

\subsection{} \label{5.3} We recall the definition of $\ua$-sequence and $\chi_{m, n}$ in \cite[\S 3 \& \S 5]{He10}.

For $i, j \in \ZZ$, we set $[i, j]=\{k \in \ZZ; i \le k \le j\}$.

Let $r \in \NN$ and $\chi \in \ZZ^r$. For each $j \in [1, r]$ we define $\ua_\chi^j: \ZZ_{\geq 0} \to \ZZ$ by $\ua_\chi^j(k)=\chi(j-k)$. Here we identify $l$ with $l+r$ for $l \in \ZZ$. We say $i \geq_\chi j$ if $\ua_\chi^i \geq \ua_\chi^j$ in the sense of lexicographic order. If $\ge_\chi$ is a linear order, we define $\e_\chi \in \fs_r$ such that $\e_\chi(i) < \e_\chi(j)$ if and only if $i >_\chi j$.

Let $s \leq r$ be two nonnegative integers which are co-prime. Define $\chi_{s, r} \in \ZZ^r$ by $\chi_{s, r}(i)=\lf \frac{is}{r} \rf-\lf (i-1) \frac{s}{r} \rf $ for $i \in [1, r]$. Set $\e_{s, r}=\e_{\chi_{s, r}}$, which is well defined since $s$ and $r$ are co-prime.

The following explicit description of $\geq_{\chi_{m,n}}$ and its application to the proof of Lemma \ref{equa} below are kindly suggested by one of the anonymous referees. First we identify $[1, n]$ with $\ZZ / n \ZZ$ in the natural way. Then one checks by definition that \begin{align*} \tag{a} \chi_{m,n}(i)= \begin{cases} & 1,  \text{ if } mi \in [0, m-1] \subseteq \ZZ / n\ZZ \\ & 0, \text{ otherwise. } \end{cases} \end{align*} Since $m$ is co-prime to $n$, $m$ is invertible in $\ZZ / n \ZZ$. We claim that \begin{align*} \tag{b} n=m\i \cdot 0 >_{\chi_{m,n}} m\i \cdot 1 >_{\chi_{m,n}} \cdots >_{\chi_{m,n}} m\i \cdot (n-1).\end{align*} Indeed, if $m\i \cdot j_0 <_{\chi_{m,n}} m\i \cdot (j_0+1)$ for some $j_0 \in [0, n-2]$, there exists $0 \leq k \leq n-1$ such that $\chi_{m,n}(m\i \cdot j_0-i) = \chi_{m,n}(m\i \cdot (j_0+1)-i)$ for $0 \leq i \leq k-1$ and $\chi_{m,n}(m\i \cdot j_0-k) < \chi_{m,n}(m\i \cdot (j_0+1)-k)$. In particular, we have $j_0+1-km=0 \in \ZZ / n\ZZ$ by (a). Thus $k \geq 1$ and $$\chi_{m,n}(m\i \cdot j_0-k+1)=1 > 0=\chi_{m,n}(m\i \cdot (j_0+1)-k+1),$$ which is a contradiction. So (b) is proved. Now it is easy to see that $\e_{m,n}$ is the permutation on $[1, n]=\ZZ / n\ZZ$ given by \begin{align*}\tag{c} \e_{m,n}(i)=mi+1 \text{ for } i \in [1, n]. \end{align*}

\subsection{}\label{polygon} 

Let $\cs=\cup_{1 \le i \le j}\ZZ^{[i, j]}$, whose elements are called {\it segments}. Let $\eta \in \cs$ be a segment. Assume $\eta \in \ZZ^{[i, j]}$. We call $\h(\eta)=i$ and $\tt(\eta)=j$ the {\it head} and the {\it tail} of $\eta$ respectively. We call the positive integer $j-i+1$ the {\it size} of $\eta$. We set $$|\eta|=\sum_{k=\h(\eta)}^{\tt(\eta)} \eta(k), \qquad \ev(\eta)=\frac{1}{\tt(\eta)-\h(\eta)+1} |\eta|.$$ Let $[i', j'] \subseteq [i, j]$ be a sub-interval, we call the restriction $\eta|_{[i',j']}$ of $\eta$ to $[i', j']$ a {\it subsegment} of $\eta$ and write $\eta|_i=\eta|_{[i,i]}$. Let $\th$ be another segment such that $\h(\th)=\tt(\eta)+1$. We denote by $\eta \vee \th \in \ZZ^{[\h(\eta), \tt(\th)]}$ the natural concatenation of $\eta$ and $\th$. For $k \in \ZZ$, we denote by $\eta[k]$ the {\it $k$-shift} of $\eta$ defined by $\eta[k](i)=\eta(i+k)$. We say two segments are of the same {\it type} if they can be identified with each other up to some shift.

For $\eta \in \QQ^{[1,n]}$ we denote by $\C(\eta) \in \RR^2$ the convex hull of the points $(0,0)$ and $(k, |\eta|_{[1, k]}|)$ for $k \in [1,n]$. We say a subsegment $\g$ of $\eta$ is {\it sharp} if $$\ev(\g)=\max\{\ev(\g'); \g' \text{ is a subsegment of $\eta$ with } \h(\g')=\h(\g)\}$$ and $$\ev(\g)=\min\{\ev(\g'); \g' \text{ is a subsegment of $\eta$ with } \tt(\g')=\tt(\g)\}.$$ If $\eta=\g^1 \vee \g^2 \vee \cdots \vee \g^s$ with each $\g^k$ a sharp subsegment, then the points $(0,0)$ and $(\tt(\g^i), |\g^1 \vee \cdots \vee \g^i|)$ in $\RR^2$ for $i \in [1,s]$ lie on the boundary of $\C(\eta)$ and their convex hull is just $\C(\eta)$. We call the dominant vector $$\slp(\C(\eta))=(\A(\g^1) \vee \cdots \vee \A(\g^s)) \in \QQ^{[1,n]}$$ the {\it slope sequence} of $\C(\eta)$. Here for any $\g \in \cs$, we define $\A(\g) \in \QQ^{[\h(\g),\tt(\g)]}$ by $\A(\g)(i)=\ev(\g)$ for $i \in [\h(\g),\tt(\g)]$.

Let $\mu \in \ZZ^{[1,n]}$ be a dominant cocharacter. Set $\mu_{m,n}=\mu+\chi_{m,n}$. Then \begin{align*} \<\o_{i,n}, \pi(\mu_{m,n})\>&=\<\o_{i,n},\pi(\mu)\>-(\frac{m i}{n}-\lf \frac{m i}{n} \rf)\\ &=\<\o_{i,n}, \pi(\mu)+\pi(\varpi_{m,n})\>-\rc \<\o_{i,n}, \pi(\varpi_{m,n})\> \lc. \end{align*} According to the proof of Theorem \ref{main'} (1), the slope sequence $$\nu=\slp(\C(\pi(\mu_{m,n})))=\pi(\slp(\C(\mu_{m,n}))$$ is the unique maximal Newton point of $B(\tW_{ad}, \pi(\mu), \pi(\s_{m,n}))$.

\subsection{}\label{Eucliean} Similar to \cite[\S 5]{He10}, we use the Euclidean algorithm to give a recursive construction of $\chi_{m,n}$, which plays a crucial role in the proof of Proposition \ref{superbasic}.

Let $D=\{(m,n) \in \ZZ_{>0}^2; m<n \text{ are co-prime}\}$. We define $f: D \to D \sqcup \{(1,1), (0,1)\}$ by \begin{align*}f(m,n)=\begin{cases} (m(\lf\frac{n}{m}\rf+1)-n, m), &  \text{ if } \frac{n}{m} \geq 2;\\ (n-(n-m)\lf\frac{n}{n-m}\rf, n-m), & \text{ otherwise.} \end{cases}\end{align*}

Define two types of segments $1_{m,n}$ and $0_{m,n}$ by \begin{align*} 1_{m,n}&=\begin{cases} (0^{(\lf\frac{n}{m}\rf-1)}, 1), &  \text{ if } \frac{n}{m} \geq 2; \\ (0, 1^{(\lf\frac{n}{n-m}\rf)}), & \text{ otherwise,}\end{cases}\end{align*}
\begin{align*} 0_{m,n}&=\begin{cases} (0^{(\lf\frac{n}{m}\rf)},1), &  \text{ if } \frac{n}{m} \geq 2; \\ (0, 1^{(\lf\frac{n}{n-m}\rf-1)}), & \text{ otherwise,}\end{cases}\end{align*} where the superscript $^{(k)}$ means to repeat the entry $k$ times. Here we do not fix the head or tail of these segments yet, as we are going to apply various shifts to them below.

Set $\cs_1=\{\eta \in \cs; \eta(i) \in \{0, 1\} \text{ for } i \in [\h(\eta), \tt(\eta)]\}$. For $\eta \in \cs_1$ and $k \in [\h(\eta), \tt(\eta)]$, set \begin{align*}\eta(k)_{m,n}=\begin{cases} 1_{m,n}, &\text{ if } \eta(k)=1;\\ 0_{m,n}, & \text{ if } \eta(k)=0. \end{cases}\end{align*}

For $k \in [\h(\eta), \tt(\eta)]$, let $\eta_{m,n,k}$ be a shift of $\eta(k)_{m,n}$ whose head is determined recursively as follows: \begin{align*} \h(\eta_{m,n,k})=\begin{cases} \h(\eta), & \text{ if } k=\h(\eta);\\ \tt(\eta_{m,n,k-1})+1, & \text{ if } k > \h(\eta). \end{cases}\end{align*}
Now we define $\phi_{m,n}: \cs_1 \to \cs_1$ by $\phi_{m,n}(\eta)=\eta_{m,n,\h(\eta)} \vee \cdots \vee \eta_{m,n,\tt(\eta)}$ for $\eta \in \cs_1$.

If $f^{h-1}(m,n) \in D$, we set $\phi_{m,n,h}=\phi_{m,n} \circ \cdots \circ \phi_{f^{h-1}(m,n)}$. Using the Euclidean algorithm, one checks that $$\phi_{m,n,h}(\chi_{f^h(m,n)})=\chi_{m,n}.$$ We say a subsegment $\g$ of $\chi_{m,n}$ is of {\it level} $h$ if it is the image of some subsegment $\g^h$ of $\chi_{f^h(m,n)}$ under the map $\phi_{m,n,h}$. When $h=1$ and $\g^h$ is of size one, we say $\g$ is an {\it elementary} subsegment of $\chi_{m,n}$.

Let $\b^1$ and $\g^1$ be two segments of $\chi^1=\chi_{f(m,n)}$ and let $\g$ be a level one subsegment of $\chi=\chi_{m,n}$. Using the Euclidean algorithm, we have the following basic facts:

(a) $\ev(\b^1) \geq \ev(\g^1)$ if and only if $\ev(\phi_{m,n}(\b^1)) \geq \ev(\phi_{m,n}(\g^1))$.

(b) Each sharp subsegment of $\g$ with the same head is of level one.

(c) If, moreover, $\g$ is an elementary subsegment of $\chi$, then $\ua_\chi^{j} < \ua_\chi^{\h(\g)-1}$ and $\ua_\chi^{j} < \ua_\chi^{\tt(\g)}$ for $j \in [\h(\g), \tt(\g)-1]$.

(d) $\ua_{\chi^1}^i < \ua_{\chi^1}^j$  if and only if $\ua_\chi^{\tt(\phi_{m,n}(\chi^1 |_i))} < \ua_\chi^{\tt(\phi_{m,n}(\chi^1 |_j))}$.

(e) $\e_{m,n}(n)=1$.

\subsection{Proof of Proposition \ref{superbasic}}\label{proof-5.1}
For a sequence of (distinct) elements $i_1, i_2, \dots, i_r$ in $[1, n]$, we denote by $\cyc(i_1, i_2, \dots, i_r) \in \fs_n$ the cyclic permutation $i_1 \mapsto i_2 \mapsto \cdots \mapsto i_r \mapsto i_1$, which acts trivially on the remaining elements of $[1,n]$.

For $\eta \in \cs$ we set $$x_\eta=\cyc(\h(\eta), \h(\eta)+1, \dots, \tt(\eta)) \in \fs=\cup_{i=1}^\infty \fs_i.$$ Similarly, for a sequence $\bc=(c^1, \dots, c^s)$ of segments, we set $x_{\bc}=x_{c^1, \dots, c^s}=x_{c^1} \cdots x_{c^s}$. If $\eta=c^1 \vee \cdots \vee c^s$, we say $\bc$ is a {\it decomposition of $\eta$}. Now we are ready to prove Proposition \ref{superbasic}.

Write $\chi=\chi_{m,n}$, $\th=\mu_{m,n}=\mu+\chi$ and $\e=\e_{m,n}$. For $h \in \ZZ_{>0}$ we set $\phi_h=\phi_{m,n,h}$ and $\chi^h=\chi_{f^h(m,n)}$. By \S\ref{polygon}, we have $\nu=\pi(\slp(\C(\th)))$. The proof will proceed as follows. First we construct a suitable sharp decomposition $\bc$ of $\th$. One checks directly $\pi(\nu_{w_{\bc}, \id})=\pi(\slp(\C(\th)))=\nu$, where $w_{\bc}=t^\th x_{\bc} \in \tW$. Then we show that $$\e w_{\bc} \e\i \le \e t^\th x_\th \e\i= t^{\e(\mu)} \s_{m,n},$$ where the last equality follows from Lemma \ref{equa} below. Set $\tw=\e w_{\bc} \e \i \s_{m, n} \i$. Then $\tw \le t^{\e(\mu)}$ and $\pi(\nu_{\tw, \s_{m, n}})=\pi(\nu_{\e w_{\bc} \e \i, id})=\e(\nu)$. This completes the proof of Proposition \ref{superbasic}.

\begin{lem} \label{equa}
We have $\s_{m,n}=\e t^\chi x_\chi \e\i$.
\end{lem}
\begin{proof}
We use the explicit descriptions of $\geq_\chi$ and $\e$ in \S \ref{5.3} via the identification of $[1, n]$ with $\ZZ / n\ZZ$ in the natural way. Then ${u_{1,n}}=x_\chi$ is the permutation $i \mapsto i+1$ on $\ZZ / n\ZZ$. Since $\s_{m,n}=\s_{1,n}^m$, we have $u_{m,n}(i)=i+m$ for $i \in \ZZ / n\ZZ$. On the other hand, we already know that $\e(i)=mi+1$ for $i \in \ZZ / n\ZZ$. Therefore, $\e x_\chi \e\i$ is the permutation $mi+1 \mapsto m(i+1)+1$ on $\ZZ / n\ZZ$, which equals $u_{m,n}$ as desired. It remains to show $\e(\chi)=\varpi_{m,n}$. For $1 \leq i < j \leq n$ we have $\e(\chi)(i)=\chi(m\i \cdot (i-1)) \geq \chi(m\i \cdot (j-1))=\e(\chi)(j)$ since $m\i \cdot (i-1) >_{\chi} m\i \cdot (j-1)$. Thus $\e(\chi)$ is dominant and equals $\varpi_{m,n}$. The proof is finished.
\end{proof}

\

Assume $I(\mu)=\{j \in [1,n-1]; \<\a_j, \mu\> \neq 0\}=\{b_1, b_2, \dots, b_{r-1}\}$ with $b_1 < b_2 < \cdots < b_{r-1}$. We set $b_0=0$ and $b_r=n$. Set $\th^i=\th|_{[b_{i-1}+1,b_i]}$ for $i \in [1,r]$. Then $\th=\th^1 \vee \cdots \vee \th^r$. Suppose we have a sharp decomposition $\bc_i$ of $\th^i$ for $i \in [1, r]$. Since $\chi \in \{0,1\}^{[1,n]}$ and $\th=\mu+\chi$, for any subsegment $\eta^i$ (resp. $\eta^j$) of $\th^i$ (resp. $\th^j$) we have $\ev(\eta^i) \geq \ev(\eta^j)$ if $i<j$. Therefore the natural union $\bc=\bc_1 \vee \cdots \vee \bc_r$ forms a sharp decomposition of $\th$.

\

Let $1 \leq i \leq r$. We will construct inductively the subsegments $\z_i^j, \g_i^j, \xi_i^j$ for $j \in [1, l_i]$ (some of them might be empty) such that

(a) $\g^0_i=\chi |_{[b_{i-1}+1, b_j]}$ and $\g_i^{j-1}=\z_i^j \vee \g_i^j \vee \xi_i^j$ for $j \in [1, l_i]$;

(b) $\z_i^j$ and $\xi_i^j$ are sharp subsegments of $\g_i^{j-1}$; any sharp subsegment of $\g_i^j$ is also a sharp subsegment of $\g_i^{j-1}$; $\g_i^{l_i}$ is a sharp subsegment of itself ({\it self-sharp});

(c) For any $j$, $\e z_{i,j-1} \e\i \ge \e z_{i,j} \e\i$.

Here \begin{align*} z_{i,j}&=t^\th y_{i-1} x_i^j v_{i,j}; \\ y_i &=x_{\bc_1} \cdots x_{\bc_{i}}; \\ x_i^j &=x_{\z_i^1, \dots, \z_i^j, \xi_i^j, \dots, \xi_i^1}; \\ v_{i,j}&=x_{\g_i^j} x_{\th^{i+1} \vee \cdots \vee \th^r} \cyc(\tt(\g_i^j), n)) \\ &=\cyc(\h(\g_i^j), \dots, \tt(\g_i^j), b_i+1, \dots, n).\end{align*} Assume we have (a), (b) and (c) for all $i$ and $j$. Set ${\z'}_i^j=\th |_{[\h(\z_i^j), \tt(\z_i^j)]}$, ${\g'}_i^{l_i}=\th |_{[\h(\g_i^{l_i}), \tt(\g_i^{l_i})]}$ and ${\xi'}_i^j=\th |_{[\h(\xi_i^j), \tt(\xi_i^j)]}$. Then $$\bc_i=({\z'}_i^1, {\z'}_i^2, \dots, {\z'}_i^{l_i}, {\g'}_i^{l_i}, {\xi'}_i^{l_i}, \dots, {\xi'}_i^2, {\xi'}_i^1)$$ forms a sharp decomposition of $\th^i$, and $$\e t^\th x_\th \e\i = \e z_{1,0} \e\i \ge \cdots \ge \e z_{1, l_1+1}\e\i=\e z_{2, 0} \e\i \ge \cdots \ge \e z_{r, l_r+1} \e\i =\e w_{\bc} \e\i$$ as desired.

\

The construction is as follows. Suppose for $1 \leq k < i$ and $0 \leq l \leq j$, $\bc_k$, $z_i^l$, $\xi_i^l$, $\g_i^l$ are already constructed, and moreover $\e z_{i,j-1} \e\i \ge \e z_{i,j} \e\i$. We construct $\z_i^{j+1}, \g_i^{j+1}, \xi_i^{j+1}$ and show that $\e z_{i,j} \e\i \ge \e z_{i, j+1} \e\i$.

If $\g_i^j$ is empty, there is nothing to do. Otherwise, we assume $\g_i^j$ is of level $h$ but not of level $h+1$. Then $\g_i^j=\phi_h(\io)$ for some subsegment $\io$ of $\chi^h$.

Case (I): {\it $\io$ is not a subsegment of any elementary subsegment of $\chi^h$.} Then there exist unique subsegments $\z$, $\g$ and $\xi$ of $\chi^h$ such that $\g$ is of level one, $\z$ (resp. $\xi$) is a proper subsegment of some elementary segment of $\chi^h$ with the same tail (resp. head), and $\io=\z \vee \g \vee \xi$. Notice that at least two of $\z$, $\g$ and $\xi$ are nonempty.

Define $\z_i^{j+1}=\phi_h(\z)$, $\g_i^{j+1}=\phi_h(\g)$ and $\xi_i^{j+1}=\phi_h(\xi)$. Note that $\ev(\chi^h|_{[\h(\z), \tt(\z)]})$ is maximal among all subsegments of $\chi^h$ with the same head and $\ev(\chi^h|_{[\h(\xi), \tt(\xi)]})$ is minimal among all subsegments of $\chi^h$ with the same tail. Therefore, (b) follows from \S\ref{Eucliean} (a) $\&$ (b). To prove (c), it suffices to show that
\begin{gather*}
\tag{d} \e z_{i, j+1} \e\i \le \e\ z_{i,j}\ \cyc(n, \tt(\z_i^{j+1}))\ \e\i; \\
\tag{e} \e\ z_{i,j}\ \cyc(n, \tt(\z_i^{j+1}))\ \e\i \le \e z_{i, j} \e \i.
\end{gather*}

Note that \begin{align*} \e z_{i,j+1} \e\i= \begin{cases} \e\ z_{i,j}\ \cyc(n, \tt(\z_i^{j+1}))\ \cyc(\h(\xi_i^{j+1})-1, \tt(\xi_i^{j+1}))\ \e\i, & \text{ if }\g \neq \emptyset; \\ \e\ z_{i,j}\ \cyc(n, \tt(\z_i^{j+1}))\ \cyc(n, \tt(\xi_i^{j+1}))\ \e\i, & \text{ otherwise.} \end{cases} \end{align*} Here we take $\cyc(n, \tt(\z_i^{j+1}))$ (resp. $\cyc(\h(\xi_i^{j+1})-1, \tt(\xi_i^{j+1}))$ and $\cyc(n, \tt(\xi_i^{j+1}))$) to be the identity element of $\fs_n$ if $\z$ (resp. $\xi$) is empty, in which case the inequality (e) (resp. (d)) becomes a priori an equality.

Now we prove (d). We suppose $\xi$ is nonempty. Otherwise, there is nothing to prove. First we assume $\g \neq \emptyset$. By \S\ref{Eucliean} (c), we have $\ua_{\chi^h}^{\h(\xi)-1} > \ua_{\chi^h}^{\tt(\xi)}$. Hence by \S\ref{Eucliean} (d), $\e(\h(\xi_i^{j+1})-1) < \e(\tt(\xi_i^{j+1}))$ and $\a=\e(e_{\h(\xi_i^{j+1})-1} - e_{\tt(\xi_i^{j+1})})$ is a positive root. Then (d) is equivalent to the following inequality \begin{align*}\< \a, \e z_{i,j+1}\i \e \i (\ba)\> & =\<\a, \e (v_{i,j} x_i^j y_{i-1})\i \e\i (\ba -\e(\th))\>  \\ &= \<\e (v_{i,j} x_i^j y_{i-1}) \e\i(\a), \ba-\e(\th)\> \\ & = -\<\e(e_{b_i+1}-e_{\h(\xi_i^{j+1})}), \e(\th)\> + \<\e(e_{b_i+1}-e_{\h(\xi_i^{j+1})}), \ba\> \\ &= \th(\h(\xi_i^{j+1}))-\th(b_i+1) + \<\e(e_{b_i+1}-e_{\h(\xi_i^{j+1})}), \ba\> > 0,\end{align*} where $\ba$ is the base alcove defined in \S \ref{alcove}. Note that $1 > \<\e(e_{b_i+1}-e_{\h(\xi_i^{j+1})}), \ba\> > -1$. Therefore, to prove (d), we have to show either $\th(\h(\xi_i^{j+1})) > \th(b_i+1)$ or $\th(\h(\xi_i^{j+1})) = \th(b_i+1)$ and $\e(b_i+1) < \e(\h(\xi_i^{j+1}))$. Note that we always have $\th(\h(\xi_i^{j+1})) \geq \th(b_i+1)$. If $\th(\h(\xi_i^{j+1})) = \th(b_i+1)$, then $\chi(b_i+1)=1>0=\chi(\h(\xi_i^{j+1}))$. Hence $\e(b_i+1) < \e(\h(\xi_i^{j+1}))$ as desired. Now we assume $\g= \emptyset$. Note that $\e(n) < \e(\tt(\xi_i^{j+1}))$. Then (d) follows by a similar argument as in the case of $\g \neq \emptyset$ with $\h(\xi_i^{j+1})-1$ replaced by $n$.

To prove (e), again we suppose that $\z$ is nonempty. Since one of $\g$ and $\xi$ is nonempty, $\tt(\z_i^{j+1}) \neq n$. By \S\ref{Eucliean} (e), $1=\e(n) < \e(\tt(\z_i^{j+1}))$. As in the proof of (d), we see that (e) holds if $\th(\tt(\z_i^{j+1})+1) < \th(\h(\z_i^{j+1}))$. Otherwise, we have $\th(\tt(\z_i^{j+1})+1) = \th(\h(\z_i^{j+1}))$, which implies that $\chi(\h(\z_i^{j+1}))=\chi(\tt(\z_i^{j+1})+1)=0$. Since $\z$ is a proper subsegment of some elementary segment of $\chi^h$ and shares the same tail with it, by \S\ref{Eucliean} (c) we have that $\ua_{\chi^h}^{\tt(\z)} > \ua_{\chi^h}^{\h(\z)-1}$. Hence by \S\ref{Eucliean} (d) and that $\chi(\tt(\z_i^{j+1})+1)=\chi(\h(\z_i^{j+1}))=0$, we have $\ua_\chi^{\tt(\z_i^{j+1})+1} > \ua_\chi^{\h(\z_i^{j+1})}$. So $\e(\h(\z_i^{j+1})) > \e(\tt(\z_i^{j+1})+1)$ and (e) holds.

\

Case (II): {\it $\io$ is a subsegment of some elementary subsegment of $\chi^h$.} We define $l_i=j$ and the construction of $\bc_i$ is finished. One checks directly that $\io$ is self-sharp, hence so is $\g_i^{l_i}=\phi_h(\io)$ by \S\ref{Eucliean} (a) $\&$ (b). If $\tt(\g_i^{l_i})=n$, the induction step is finished. Otherwise, it remains to show \begin{align*} \tag{f} \e z_{i,l_i} \e\i \ge \e\ z_{i,l_i}\ \cyc(\tt(\g_i^{l_i}), n)\ \e\i=\e z_{i,l_i+1} \e\i. \end{align*} Note that $1=\e(n) < \e(\tt(\g_i^{l_i}))$. Again, we see that (f) holds if $\th(\h(\g_i^{l_i})) > \th(b_i+1)$. Otherwise, we have $\th(\h(\g_i^{l_i})) = \th(b_i+1)$, $\chi(\h(\g_i^{l_i}))=0$ and $\chi(b_i+1)=1$ since $b_i \in I(\mu)$. Hence $\e(\h(\g_i^{l_i})) > \e(b_i+1)$ and (f) still holds.

\begin{example} Finally we provide an example.

Let $n=8$, $m=5$ and $\mu=(1, 1, 1, 0, 0, 0, 0, 0)$. Then $\chi_{m,n}=(0, 1, 0, 1, 1, 0, 1, 1) \in \ZZ^8$, $\e_{m,n}=\cyc(1, 6, 7, 4, 5, 2, 3, 8)$ and $u_{m, n}=\cyc(6, 3, 8, 5, 2, 7, 4, 1)$.

Note that $I(\mu)=3$. Applying the algorithm in the proof of Proposition \ref{superbasic}, we obtain the following sharp decomposition: $$\mu_{m,n}=(1, 2, 1, 1, 1, 0, 1, 1)=(1, 2) \vee(1) \vee (1, 1) \vee (0, 1, 1).$$ Hence $\nu=(\frac{1}{2}, \frac{1}{2}, 0, 0, 0, -\frac{1}{3}, -\frac{1}{3}, -\frac{1}{3})$. Moreover, one checks that \begin{align*} t^{\e_{m,n}(\mu)} \s_{m,n} & \ge t^{\e_{m,n}(\mu)} \s_{m,n} \cyc(8, 3)\\ & \ge t^{\e_{m,n}(\mu)} \s_{m,n} \cyc(8, 3) \cyc(1, 3)\\ & \ge t^{\e_{m,n}(\mu)} \s_{m,n} \cyc(8, 3) \cyc(1, 3) \cyc(1, 2).\end{align*} Set $\tw=t^{\e_{m,n}(\mu)} \s_{m,n} \cyc(8, 3) \cyc(1, 3) \cyc(1, 2) \s_{m, n} \i$. Then $\tw \le t^{\e_{m, n}(\mu)}$ and $\pi(\bar \nu_{\tw, \s_{m,n}})=\nu$. This verifies Proposition \ref{superbasic} in this case.
\end{example}

\appendix

\section{The set $B(G, \{\mu\})$}

In the appendix, we discuss the relation between the set $B(\tW, \mu, \s)$ defined in \S\ref{1.4} and the set $B(G, \mu)$ for $p$-adic groups.

\subsection{} Recall that $F$ is a finite field extension of $\QQ_p$, $L$ is the completion of the maximal unramified extension of $F$ and $G$ is a connected reductive algebraic group over $F$. We first discuss the Iwahori-Weyl group of $G$ over $L$. We follow \cite{HR}.

Let $S$ be a maximal $L$-split torus that is defined over $F$ and let $T$ be its centralizer. Since $G$ is quasi-split over $L$, $T$ is a maximal torus.

Let $N$ be the normalizer of $T$. The {\it finite Weyl group} associated to $S$ is $W_0=N(L)/T(L).$ The {\it Iwahori-Weyl group} associated to $S$ is $\tW=N(L)/T(L)_1$, where $T(L)_1$ denotes the unique Iwahori subgroup of $T(L)$.

Let $\G=\Gal(\bar L/L)$. As in \cite[p.\,195]{HR} and \cite[\S 4.2]{PRS}, one associates a reduced root system $R$ to $(G, T)$. Let $V=X_*(T)_\G \otimes \RR=X_*(S) \otimes \RR$. We fix a $\s$-invariant alcove $\aa_G$ in the apartment of $S$ along with a special vertex of $\aa_G$. The special vertex allows us to identify $V$ with the apartment of $S$. The alcove $\aa_G$ is contained in a unique (relative) Weyl chamber of $V$, which we call the dominant chamber. The hyperplanes in $V$ then give a reduced root system $R$. We have the semi-direct product $$\tW=X_*(T)_\G \rtimes W_0.$$

The group $X_*(T)_\G$ is not torsion-free in general. However, by \cite[\S8.1]{HH}, the torsion part $X_*(T)_{\G, tor}$ lies in the center of $\tW$ and one may identify the extended affine Weyl group of $\mathfrak R$ with $\tW/X_*(T)_{\G, tor}$. For our purpose, it suffices to consider the case where $X_*(T)_\G$ is torsion-free. In this case, the root system $R$, together with the cocharacter group $X_*(T)_\G$, defines a reduced datum $\mathfrak R$, and $\tW$ is the extended afffine Weyl group of $\mathfrak R$ introduced in section \ref{1}.

The Iwahori-Weyl group $\tW$ contains the affine Weyl group $W_a$ as a normal subgroup and $$\tW=W_a \rtimes \Omega,$$ where $\Omega \cong \pi_1(G)_{\G}$ is the normalizer of the alcove $\aa_G$. The Bruhat order on $W_a$ extends in a natural way to $\tW$. The Frobenius morphism $\s$ induces an action on $\tW$, which we denote by $\Ad(\s)$.


\subsection{} Recall that $\{\mu\}$ is a geometric conjugacy class of cocharacters of $G$. We may regard $\{\mu\}$ as a conjugacy class in $X_*(T)$ under the absolute Weyl group. Following \cite[\S4.3]{PRS}, let $\tilde \L_{\{\mu\}} \subseteq \{\mu\}$ be the subset of cocharacters which are $B$-dominant for some Borel subgroup $B$ defined over $L$ with $B \supseteq T$. Then $\tilde \L_{\{\mu\}}$ is a single $W_0$-orbit. Let $\L_{\{\mu\}}$ be the image of $\tilde \L_{\{\mu\}}$ in $X_*(T)_\G$. The $\{\mu\}$-admissible set is defined by $$\Adm(\{\mu\})=\{w \in \tW; w \le t^\xi \text{ for some } \xi \in \L_{\{\mu\}}\}.$$

Let $\tilde \mu$ be the unique dominant cocharacter in $\tilde \L_{\{\mu\}}$ and $\mu$ be its image in $\L_{\{\mu\}}$. Then $\L_{\{\mu\}}=W_0 \cdot \mu$ and $\Adm(\{\mu\})$ equals $\Adm(\mu)$ defined in \S\ref{1.4}.\footnote{In the function field case, it is shown in \cite[Remark 2.11]{Ri} that $\Adm(\{\mu\})=\{w \in \tW; w \le t^\xi \text{ for some } \xi \in im\{\mu\} \subseteq X_*(T)_\G\}$. We do not need this result here.}

\subsection{}
The set $B(G)$ of $\s$-conjugacy classes of $G(L)$ is classified by Kottwitz in \cite{Ko1} and \cite{Ko2}.

For any $b \in G(L)$, we denote by $[b]$ the $\s$-conjugacy class of $G(L)$ that contains $b$. Let $\G_F=\Gal(\bar L/F)$ be the absolute Galois group of $F$. Let $\k_G: B(G) \to \pi_1(G)_{\G_F}$ be the Kottwitz map \cite[\S 7]{Ko2}. This gives one invariant.

Another invariant is given by the Newton map. To an element $b \in G(L)$, we associate its Newton point $\bar \nu_b \in X_*(T)_{\G} \otimes \RR$.

By \cite[\S 4.13]{Ko2}, the map $$B(G) \to \pi_1(G)_{\G_F} \times (X_*(T)_{\G} \otimes \RR), \qquad b \mapsto (\k_G(b), \bar \nu_b)$$ is injective.

For any $w \in \tW$, we choose a representative in $N(L)$ and also write it as $w$. The map $N(L) \to G(L)$ induces a map $\tW \to B(G)$. By \cite[\S 3]{He99} and \cite[2.4]{GHN}\footnote{\cite[2.4]{GHN} is stated for adjoint groups, but the result for arbitrary groups holds by combining with the reduction argument in \cite[2.3]{GHN}.}, this map is surjective. The restrictions of the Kottwitz map and the Newton map on $\tW \subseteq G(L)$ are the maps $\k_{\tW, \s}$ and $w \mapsto \bar \nu_{w, \s}$ defined in \S\ref{1.3}.

\subsection{} In this section, we assume furthermore that $G$ is a quasi-split connected reductive group over $F$ and that $H$ is an inner form of $G$. We denote by $\s_G$ and $\s_H$ the Frobenius morphisms of $G$ and $H$ respectively. Via the canonical isomorphism $X_*(T)_{\G} \otimes \RR \cong (X_*(T) \otimes \RR)^\G$, we may identify $\mu^\diamondsuit_{\s_G}$ in \S\ref{1.4} with $[\G_F: \G_{F, \tilde \mu}] \i \sum_{\t \in \G_F/\G_{F, \tilde \mu}} \t(\tilde \mu)$ in \cite[(6.1.1)]{Ko2}, where $\G_{F, \tilde \mu}$ is the isotropy group of $\mu$ in $\G_F$. By Lemma \ref{suites}, $\mu^\diamondsuit_{\s_G}=\mu^{\diamondsuit}_{\s_H}$.

Let $\mu^\sharp$ be the image of $\tilde \mu$ under the natural map $X_*(T) \to \pi_1(H)_{\G_F}$. 

Set $$B(H, \{\mu\})=\{[b] \in B(H); \k_H(b)=\mu^\sharp, \bar \nu_b \leq \mu^\diamondsuit_{\s_H}\}.$$ Then we may identify $B(H, \{\mu\})$ with $B(\tW, \mu, \s_H)$. Theorem \ref{main'} may be reformulated as follows:

The set $B(H, \{\mu\})$ contains a unique maximal element
and this element is represented by an element in $\Adm(\{\mu\})$.

\section*{Acknowledgement} We thank the referees for their careful reading of this paper and many useful comments.

\end{document}